\newtheorem{theorem}{Theorem}[section]
\newtheorem{corollary}[theorem]{Corollary}
\newtheorem{lemma}[theorem]{Lemma}
\newtheorem{proposition}[theorem]{Proposition}
\theoremstyle{definition}
\newtheorem{definition}[theorem]{Definition}
\newtheorem{remark}[theorem]{Remark}
\newtheorem{example}{Example}
\newcommand{\R}{\ensuremath{\mathbb{R}}}
\newcommand{\SP}{\ensuremath{\mathcal{P}}}
\newcommand{\Hcal}{\ensuremath{\mathcal{H}}}
\newcommand{\D}{\ensuremath{\mathcal{D}}}
\newcommand{\Es}{\ensuremath{\mathbb{S}}}
\newcommand{\Reeb}{\ensuremath{\mathcal{R}}}
\def\david{\textcolor{black}}
\title{
	{Mechanical Hamiltonization of unreduced $\phi$-simple Chaplygin systems}\\
}
\date{}
\author{Alexandre Anahory Simoes\thanks{School of Science and Technology, IE University, Madrid, Spain. ({alexandre.anahory@ie.edu}).} \and Juan Carlos Marrero\thanks{ULL-CSIC Geometría Diferencial y Mecánica Geométrica, Departamento de Matemáticas, Estadística e Investigación Operativa and Instituto de Matemáticas y Aplicaciones (IMAULL), Faculty of Sciences, University of La Laguna, Canary Islands, Spain. ({jcmarrer@ull.edu.es}).} \and David Mart\'in de Diego\thanks{Instituto de Ciencias Matematicas (CSIC), Calle Nicol\'as Cabrera 13-15, Cantoblanco, 28048, Madrid, Spain ({david.martin@icmat.es}).}}
\begin{document}
	
	\maketitle
	
	\begin{abstract}
		In this paper, we prove that the trajectories of unreduced $\phi$-simple Chaplygin kinetic systems are reparametrizations of horizontal geodesics with respect to a modified Riemannian metric. Furthermore, our proof is constructive and these Riemannian metrics, which are not unique, are obtained explicitly in interesting examples. We also extend these results to $\phi$-simple Chaplygin mechanical systems (not necessarily kinetic).
	\end{abstract}

        {\bf Keywords:} {Hamiltonization, unreduced systems, $\phi$-simple Chaplygin systems, Riemannian submersions.}

        {\bf MSC (2020):} 	37J60, 53C22, 70F25, 70G45.
	
	\section{Introduction}

    \subsection{Hamiltonization and invariant volume forms of reduced $\phi$-simple Chaplygin systems and our problem}

    A classical theory in nonholonomic mechanics is the so-called Hamiltonization problem (cf. \cite{Chaplygin2008, Fedorov2004, Ehlers2005, Bolsinov2011}). This problem, which has received a lot of attention in recent years (see \cite{BaYa} and the references therein; see also \cite{GaMa}), consists in arguing when a symmetric nonholonomic mechanical system admits a Hamiltonian formulation, up to reparametrization, after reduction by symmetries.
    
    $\phi$-simple Chaplygin systems are Hamiltonizable in the previous sense. We recall that a Chaplygin system is a nonholonomic mechanical system whose constraint distribution is the horizontal subbundle associated with a principal connection in a principal $G$-bundle and, in addition, the Lie group $G$ is a symmetry for the system (see \cite{koiller92}; see also \cite{Bloch, Cortes}). A tensor field $\mathcal{T}$ of type $(1, 2)$ on the base space $\bar{Q}=Q/G$ of the principal bundle, called the gyroscopic tensor, plays an important role in the description of the geometry of the reduced nonholonomic system. \david{ $\mathcal{T}$ measures the interplay between the kinetic Riemannian metric and the non-integrability of the constraint distribution}. In fact, using $\mathcal{T}$, one may construct an almost symplectic structure $\Omega_{nh}$ on the reduced phase space $T^{*}(Q/G)$ such that the reduced nonholonomic dynamics is Hamiltonian with respect to $\Omega_{nh}$. If the gyroscopic tensor $\mathcal{T}$ satisfies 
    \begin{equation}\label{def:gyroscopic}
        \mathcal{T} = Id \otimes d\phi - d\phi \otimes Id
    \end{equation}
    with $\phi\in C^{\infty}(\bar{Q})$, the Chaplygin system is $\phi$-simple \cite{GaMa}.
    
    A local characterization of $\phi$-simple Chaplygin systems and non-trivial examples of such systems were obtained in \cite{Garcia-Naranjo2019a} (see also \cite{Garcia-Naranjo2019b, GaMa}). Moreover, in \cite{GaMa}, it was proved that a Chaplygin system is $\phi$-simple if and only if the almost symplectic structure $\Omega_{nh}$ is conformally symplectic (we stress that the proof of the direct implication of this statement is a consequence of a result by Stanchenko \cite{Stanchenko1989}). So, as we mentioned before, $\phi$-simple Chaplygin systems are Hamiltonizable. In fact, the conformal factor for $\Omega_{nh}$ is related with the invariant volume form for the reduction of the Chaplygin system (for more details see \cite{GaMa}). Thus, a necessary condition for a Chaplygin system to be $\phi$-simple is the existence of an invariant volume form for the reduced system. In this direction, we remark that invariant volume forms for the reduction of Chaplygin systems were discussed in \cite{Cantrijn2002} and for general nonholonomic mechanical systems in \cite{FGNM2015} (see also \cite{Grab12}). In addition, very recently, $\phi$-simple Chaplygin systems with gyroscopic terms were considered in \cite{Dragovic2023}.
    
    
    Now, suppose that a  nonholonomic mechanical system \david{with symmetry} is Hamiltonizable. Then, a natural question arises: \textit{does the unreduced system also admit a Hamiltonian formulation?} Since we deal exclusively with nonholonomic mechanical systems, the previous question may be even more specialized: \textit{are there a (modified) Riemannian metric and a (modified) potential energy such that the trajectories of the new unconstrained mechanical system, with initial velocity in the constraint distribution, are reparametrizations of the nonholonomic trajectories of the unreduced nonholonomic mechanical system?}
    
    This question is a particular case of a more general problem, which was posed in \cite{AAMkinetic}: the kinetic Lagrangianization of kinetic nonholonomic systems or, more generally, the formulation of a nonholonomic mechanical system as an unconstrained Lagrangian mechanical system (see items 3 and 7 of Section 6 in \cite{AAMkinetic}).

    \subsection{Aim of the paper, related previous contributions and our strategy}
    
    The aim of our paper is to give a positive answer to the questions in the previous section for $\phi$-simple Chaplygin systems.
    
    We remark that, comparatively speaking, the Hamiltonization of unreduced nonholonomic mechanical systems has received much less attention than the Hamiltonization problem for the reduction of symmetric nonholonomic mechanical systems. In fact, to our knowledge, only a few papers (see \cite{BLOCH2009225, belrhazi2024geodesic} and also \cite{marta}) have discussed this problem.

    In these papers, and particularly in \cite{belrhazi2024geodesic}, the idea is to use the inverse problem \david{of variational calculus} to provide a set of conditions for the existence of a Riemannian metric, the geodesic extension in the terminology of \cite{belrhazi2024geodesic}, whose geodesics with initial velocity in the constraint distribution are the trajectories of the kinetic nonholonomic system.
    
    Our approach to the problem is different. In fact, we will use the symmetries of the $\phi$-simple Chaplygin kinetic system and the Riemannian submersion theory (\cite{ON66, ON67, ON83}) to obtain a modified Riemannian metric such that the nonholonomic trajectories are reparametrizations of geodesics of the new metric. Our method is constructive and we can give an explicit expression of the new metric. Moreover, using the previous construction for $\phi$-simple Chaplygin kinetic systems, we obtain the corresponding results for the more general case of a $\phi$-simple Chaplygin mechanical system (\david{adding a potential energy}).

    In the particular case of a $\phi$-simple Chaplygin kinetic system, the Riemannian submersion theory is used as follows. For a Riemannian submersion, the horizontal distribution $\text{Hor}$ in the total space is just the orthogonal subbundle of the vertical subbundle. $\text{Hor}$ is, in general, non-integrable and moreover, the restriction of the geodesic flow to $\text{Hor}$ is tangent. This means that if the initial velocity of a geodesic is horizontal then its velocity curve will be entirely contained in $\text{Hor}$. In addition, the projection of a horizontal geodesic is a geodesic in the base space \david{for the reduced metric} and the horizontal lift of a geodesic in the base space is a geodesic in the total space (for more details, see \cite{ON66, ON67, ON83}). Based on this fact, and for a $\phi$-simple Chaplygin kinetic system, our goal in the paper will be to reproduce this picture by constructing a new metric with respect to which the vertical space is orthogonal to the constraint distribution. In this scenario, the horizontal geodesics would also be nonholonomic trajectories. The relevant mechanism is the existence of a projection mapping nonholonomic trajectories to geodesics or, at least, onto reparametrizations of geodesics. In this case, we are able to tweak the original metric in order to make the horizontal space orthogonal to the vertical space and still preserve the Riemannian submersion structure.

    \subsection{Results of the paper}

    For a Chaplygin mechanical system, we will use the following notation $$(Q, g, V, G, \D),$$ where $Q$ is the configuration space, $g$ is \david{a $G$-invariant} Riemannian metric on $Q$, $V: Q\to \R \in C^{\infty}(Q)$ is the $G$-invariant potential energy, $G$ is the symmetry Lie group and $\D\subseteq TQ$ is the constraint distribution. When $V=0$, the system is kinetic. The base space of the principal $G$-bundle is $\bar{Q}=Q/G$ and the Riemannian metric $\bar{g}$ on $\bar{Q}$ is characterized by the condition
    $$\bar{g}(T \pi (X) , T\pi(Y)) = g(X, Y), \text{ for } X, Y \in \D,$$
    with $\pi : Q\to \bar{Q}$ the principal bundle projection.
 
	Then, the main result of the paper is the characterization of trajectories of $\phi$-simple kinetic Chaplygin systems as reparametrizations of geodesics with respect to a modified metric (Theorem \ref{extension:main:thm}).

\vspace{1mm}

 {\flushleft \bf Theorem \ref{extension:main:thm}.} {\it Let $(Q,g, G, \D)$ be a kinetic Chaplygin system and $\pi:Q\rightarrow \bar{Q}$ the projection onto the quotient space. If the system is $\phi$-simple, there exists a Riemannian metric $h$ on $Q$ such that all its geodesics starting in $\D$ are reparametrizations of the nonholonomic trajectories associated with the Chaplygin system. In fact, if $V\pi=\ker T\pi$ and $g_{can}$ is the Riemannian metric on $\bar{Q}$ defined by $g_{can}=e^{2\phi}\bar{g}$ then $h$ may be chosen as follows
        \begin{enumerate}
			\item $h(\bar{X}^{h}, \bar{Y}^{h}) = g_{can}(\bar{X}, \bar{Y}), \forall \bar{X}, \bar{Y}\in \mathfrak{X}(\bar{Q})$;
			\item $h(Z, W) = g(Z, W), \forall Z,W\in \Gamma(V\pi)$;
			\item $h(\bar{X}^{h}, Z)= 0, \forall \bar{X}\in \mathfrak{X}(\bar{Q}), \ Z\in \Gamma(V\pi)$.
		\end{enumerate}
    Here , $\bar{X}^h$, $\bar{Y}^h$ are the horizontal lifts of $\bar{X}$, $\bar{Y}$, respectively, by the principal connection associated with the Chaplygin system.}
 
    \vspace{1mm}
    
    Note that our result is constructive since it provides an explicit construction of one of these metrics, which we will call the \textit{principal Riemannian metric}. 


    An immediate consequence of our main theorem is an interesting result. Namely, the nonholonomic trajectories become locally minimizing curves with respect to the new metric, as the following corollary states.

    {\flushleft \bf Corollary \ref{Riemannian:distance:cor}.} {\it
        Let $(Q,g, G, \D)$ be a kinetic Chaplygin system. Suppose that the system is $\phi$-simple and $h$ is the principal Riemannian metric.  If $c:[0,T]\to Q$ is a nonholonomic trajectory there exists $0<t_{0}<T$ such that for all $t<t_{0}$ the length of the curve $c:[0,t]\to Q$ is just the Riemannian distance associated with $h$ between $c(0)$ and $c(t)$.}
    
    \vspace{1mm}

\david{As a demonstration of the applicability in practical examples, we have constructed explicit examples of Riemannian metrics whose geodesics with initial velocity satisfying the nonholonomic constraints coincide up to re-parametrization with the trajectories of the initial nonholonomic system.}

For instance, in the classical example of the vertical rolling disk, the principal Riemannian metric is:
\begin{equation*}
    h = m dx^{2} + m dy^{2} + (I + 2 mR^{2}) d\theta^{2} + J d\varphi^{2} - mR \cos \varphi \ dx d\theta - mR \sin\varphi \ dy d\theta.
\end{equation*}
Here, $(x,y,\theta, \varphi)$ are standard coordinates in the configuration space $Q=\R^{2}\times \Es^{1} \times \Es^{1}$, $m$ and $R$ are the mass and radius of the disk and $I, J$ are the inertia.

In fact, in Example \ref{example1}, we compute the geodesic equations for $h$ and deduce that the geodesics with initial velocity in the constraint distribution coincide with the nonholonomic trajectories, without the need of a reparametrization.

In the nonholonomic particle (see Example \ref{example2}), the configuration space is $\R^{3}$, $(x,y,z)$ are the standard coordinates on $\R^{3}$ and our method allows us to construct the principal Riemannian metric
$$h = (1+y^{2}) dx^{2} + \frac{dy^{2}}{1+y^{2}} + dz^{2} - y dx dz,$$
whose geodesics with initial velocity satisfying $\dot{z}=y\dot{x}$ are precisely the nonholonomic trajectories, up to a reparametrization. 
An other interesting example is the case of the Veselova problem where we also provide (see Example \ref{example3}) the principal Riemannian metric describing the nonholonomic trajectories.

In all three cases, we can observe that the reparametrization is related to the function $\phi$, determining the $\phi$-simple nature of the system. The vertical rolling disk is a special case where $ \phi\equiv 0$ forcing the associated reparametrization to be the identity.

We remark that the method has been applied to the three previous examples but, since it is constructive, it may be applied to more complex cases (see Remark \ref{multi:dimensional:Veselova}).

Finally, the main result of the paper is extended to more general $\phi$-simple Chaplygin systems where one has a $G$-invariant potential function in addition to the kinetic energy.

    {\flushleft \bf Theorem \ref{potential:thm}.} {\it Let $(Q,g, V, G, \D)$ be a Chaplygin system with $G$-invariant potential $V$. If the system is $\phi$-simple, there exists a Riemannian metric $h$ on $Q$ such that all the mechanical trajectories associated with $h$ and $V$ with initial velocity in $\D$ are reparametrizations of the nonholonomic trajectories associated with the Chaplygin system. In fact, $h$ may be chosen as follows
        \begin{enumerate}
			\item $h(\bar{X}^{h}, \bar{Y}^{h}) = g_{can}(\bar{X}, \bar{Y}), \forall \bar{X}, \bar{Y}\in \mathfrak{X}(\bar{Q})$;
			\item $h(Z, W) = g(Z, W), \forall Z,W\in \Gamma(V\pi)$;
			\item $h(\bar{X}^{h}, Z)= 0, \forall \bar{X}\in \mathfrak{X}(\bar{Q}), \ Z\in \Gamma(V\pi)$.
		\end{enumerate}
    Here, $V\pi$ is the vertical bundle associated to $\pi$ and $\bar{X}^h$, $\bar{Y}^h$ are the horizontal lifts of $\bar{X}$, $\bar{Y}$, respectively, by the principal connection associated with the Chaplygin system.}

    \subsection{Organization of the paper}
    
    The paper is organized as follows. In Section 2, we first briefly review kinetic nonholonomic systems, Riemannian submersions, the notion of a principal connection on a principal $G$-bundle and Chaplygin systems. This section is intended to make the paper self-contained and settle the notation that we use throughout the paper. In Section 3 we prove our main result: unreduced $\phi$-simple Chaplygin kinetic systems are Hamiltonizable, up to reparametrization, by geodesics. In Section 4, we show that the previous results are not essentially changed by the inclusion of a $G$-invariant potential function into the picture. In Section 5, we present examples of typical $\phi$-simple Chaplygin mechanical systems (the vertical rolling disk, the nonholonomic particle and the Veselova system) where we construct the principal Riemannian metric Hamiltonizing the nonholonomic trajectories. Finally, in Section 6, we present the conclusions and some lines of research for future work.

	\section{Preliminaries}
	
	\subsection{Nonholonomic systems}
	
	Throughout this section suppose that $(Q,g)$ is a Riemannian manifold, $\D$ is a distribution on $Q$ and consider the \textit{kinetic nonholonomic system} defined by the Lagrangian function $L_{g}:TQ\rightarrow \R$ given by
	$$L_{g}(v_{q}) = \frac{1}{2} g(v_{q}, v_{q})\, , \quad v_q\in T_qQ$$
	and also by \david{a nonintegrable} distribution $\D$. We may define two projections resulting from the decomposition of the tangent bundle induced by the metric $g$
	$$TQ = \D \oplus \D^{\bot},$$
	i.e., the projection to $\D$ and to $\D^{\bot}$ given by $\SP:TQ \rightarrow \D$ and $\mathcal{Q}:TQ \rightarrow \D^{\bot}$, respectively.
	
	The trajectories of the nonholonomic system can be described as geodesics but relative to a non-symmetric connection (see \cite{Lewis98})
	$$\nabla^{nh}_{X} Y = \nabla^{g}_{X} Y + (\nabla_{X}\mathcal{Q} ) Y, \quad X,Y \in \mathfrak{X}(Q),$$
	where $\nabla^{g}$ is the Levi-Civita connection. Therefore, a curve $c:[0,h]\rightarrow Q$ is a trajectory of the kinetic nonholonomic system $(L_{g},\D)$ if and only if it satisfies the equations
	\begin{equation}\label{nh:geodesic:eq}
		\nabla^{nh}_{\dot{c}(t)} \dot{c}(t) = 0, \quad \dot{c}(0)\in\D.
	\end{equation}

 \begin{remark}
     Equation \eqref{nh:geodesic:eq} is equivalent to the Lagrange-d'Alembert equations associated with the Lagrangian function $L_{g}$ (\cite{LMdD1996, GrLeMaMa, Bloch}). These equations are given by
     \begin{equation} \label{LdA}
	\begin{split}
	& \frac{d}{dt}\left(\frac{\partial L_{g}}{\partial \dot{q}^{i}}\right) - \frac{\partial L_{g}}{\partial q^{i}}=\lambda_{a}\mu^{a}_{i}(q)  \\
	& \mu^{a}_{i}(q)\dot{q}^{i}=0,
	\end{split}
	\end{equation}
    where $(q^{i})$ are local coordinates on $Q$, $(q^{i}, \dot{q}^{i})$ are the corresponding local coordinates on $TQ$, $\mu^{a}=\mu_{i}^{a}dq^{i}$ are local $1$-forms on $Q$ such that $\D=\{v_{q}\in T_{q} Q \ | \ \mu^{a}(v_{q})=0)\},$ and $\lambda_{a}$ are Lagrange multipliers that can be determined \david{considering the total derivative} of the constraint equations $\mu^{a}_{i}(q)\dot{q}^{i}=0$.
 \end{remark}

 Throughout the paper, we will denote by $\Gamma_{g}$ the geodesic vector field, i.e., the vector field \david{on $TQ$} whose trajectories are the geodesics of $g$; and by $\Gamma_{(g,\D)}$ the vector field on $\D$ whose trajectories are nonholonomic trajectories, i.e., satisfy equation \eqref{nh:geodesic:eq}.
	
	\subsection{Principal fiber bundles}
	We include here a basic treatment of principal fiber bundles to make the paper more self-contained and fix some notation. The interested reader can read more about the subject in \cite{AM78, KoNo-I, LR89}, for instance.
 
    Let $\Phi:G\times Q\rightarrow Q$ be a \textit{left action} of a Lie group $G$ on a smooth manifold $Q$, denoted by $\Phi(g,q)=g\cdot q=\Phi_g (q)=\Phi_{q}(g)$. The orbit of the action through a point $q\in Q$ is the set $\text{Orb}(q)=\{g\cdot q\; |\;  g\in G\}$. Denote by $\mathfrak{g}$ the Lie algebra of the group $G$. For each element $\xi$ in the Lie algebra there is a vector field on $Q$ called the \textit{infinitesimal generator} of the group action denoted by $\xi_Q$ and defined by $\xi_Q (q)=T_e \Phi_q (\xi)$ \david{where $e$ es the neutral element of $G$}. If we assume that the action $\Phi$ is free and proper we can endow the quotient space $\bar{Q}=Q/G$ with a manifold structure under which the natural projection $\pi:Q\rightarrow \bar{Q}$ is a surjective submersion. Therefore associated to the left action $\Phi$ we have a fibre bundle $\pi$ satisfying the following properties:
	\begin{enumerate}
		\item each $\pi$-fibre, denoted by \david{$Q_{[q]}=\pi^{-1}([q])$}, is an orbit of the action;
		\item the standard fibre is $G$;
		\item the local trivialization $\{U,\psi\}$ of the fibre bundle is \textit{equivariant}, that is, given $U\subseteq \bar{Q}$, an open subset of $\bar{Q}$, the trivialization
		\begin{align*}
			\psi: \pi^{-1}(U) & \rightarrow U\times G \\
			q & \mapsto (\pi(q), \psi_{2} (q) )
		\end{align*}
		satisfies $\psi_{2} (g\cdot q)=g \cdot \psi_{2} (q)$, where we are considering the group multiplication on the right hand-side.
	\end{enumerate}
	The above properties define the \textit{principal $G$-bundle} $(Q, \bar{Q},G,\pi)$, where $Q$ is the \textit{bundle space}, $\bar{Q}$ is the \textit{base space}, $G$ is the \textit{structure group} and $\pi$ is the \textit{projection}. The \textit{vertical space} at points $q\in Q$, denoted by $V_{q}\pi$, form a distribution $V\pi$ on $Q$ and defined as the kernel of $\pi_{*}\equiv T\pi: TQ\rightarrow T\bar{Q}$. The vectors contained in $V\pi$ are called \textit{vertical vectors}. Notice that vertical vectors are just tangent vectors to the orbits of $G$. Explicitly, $$V_{q}\pi=T_{q}(\text{Orb}(q))=\{\xi_{Q}(q) \ | \ \xi\in\mathfrak{g}\}.$$
	A \textit{principal connection} on the principal $G$-bundle is a smooth distribution $\Hcal$ on $Q$ satisfying the following properties:
	\begin{enumerate}
		\item $T_{q}Q=V_{q}\pi\oplus \Hcal_{q}$, for every $q\in Q$;
		\item The distribution is $G$-invariant, i.e., $\Hcal_{g\cdot q}= (\Phi_{g})_{*}(\Hcal_{q})$.
	\end{enumerate}
	$\Hcal$ is called the \textit{horizontal distribution} determined by the connection and the vectors contained in $\Hcal$ are called \textit{horizontal vectors}. Many authors give an alternative equivalent definition of a principal connection as a $\mathfrak{g}$-valued one-form $\omega:TQ\rightarrow \mathfrak{g}$ such that
	\begin{enumerate}
		\item $\omega(\xi_{Q}(q))=\xi$ for every $\xi\in\mathfrak{g}$;
		\item $\omega((T_{q}\Phi_{g})(X))=\text{Ad}_{g}(\omega(X))$ for every $X\in T_{q}Q$.
	\end{enumerate}
	Recall that the adjoint map $\text{Ad}_{g}:\mathfrak{g}\rightarrow\mathfrak{g}$ is the pushforward at the identity of the conjugation map $C_{g}:G\rightarrow G$, $h\mapsto ghg^{-1}$. Such a map $\omega:TQ\rightarrow \mathfrak{g}$ is called the \textit{connection form} determined by the principal connection. Here we may recover the previous properties by defining the \textit{horizontal subspace} at $q$ as $\Hcal_{q}=\text{ker} \ \omega|_{T_{q}Q}$. Conversely, given an horizontal distribution $\Hcal$, we may prove that a $\mathfrak{g}$-valued one-form $\omega$ satisfying $\omega(X)=0$ for every $X\in \Hcal$ and $\omega(\xi_{Q})=\xi$ for every $\xi\in \mathfrak{g}$ is a connection form. The only non-obvious fact needed to prove this is the equality $$(T_{q} \Phi_{g})(\xi_{Q}(q))=[\text{Ad}_{g}(\xi)]_{Q}(g\cdot q).$$
	
	Given a principal connection, every vector $X\in T_{q}Q$ can be uniquely written as $$X=\text{hor}(X)+\text{ver}(X),$$ where $\text{hor}:TQ\rightarrow \Hcal$ and $\text{ver}:TQ\rightarrow V\pi$ are, respectively, the \textit{horizontal} and the \textit{vertical projectors} associated to the decomposition of the tangent space determined by the connection.
	
	The \textit{horizontal lift} of a vector field $X\in\mathfrak{X}(\bar{Q})$ on the base space is the unique horizontal vector field $X^{h}\in\mathfrak{X}(Q)$ on the bundle space that projects onto $X$, i.e., $(T_{q}\pi)(X^{h}(q))=X(\pi(q))$.
	
	A vector field $X\in\mathfrak{X}(Q)$ on the bundle space is \textit{$G$-invariant} if $X(g\cdot q)=(T_{q} \Phi_{g}) (X(q))$. For each $G$-invariant vector field $X$ on $Q$, there exists a unique \textit{reduced vector field} $\overline{X}\in\mathfrak{X}(\bar{Q})$ on the base space such that the following diagram is commutative:
	\[
	\begin{tikzcd}[row sep=2.5em]
		TQ \arrow{rr}{T \pi} && T\bar{Q} \\
		Q \arrow[u,"X"] \arrow{rr}{\pi} && \bar{Q} \arrow[u,"\overline{X}"']
	\end{tikzcd}
	\]
	Moreover, integral curves of $X$ project to integral curves of $\overline{X}$. Conversely, it is well-known in the literature that if we are given the integral curves of the reduced vector field $\overline{X}$ we can \textit{reconstruct} the integral curves of $X$.

\begin{definition}
    The horizontal lift of a curve $\bar{c}:I\to \bar{Q}$ on $\bar{Q}$ is a curve $c:I\to Q$ on $Q$ such that $\pi(c(t))=\bar{c}(t)$ for all $t\in I$ and such that $\dot{c}(t)\in \mathcal{H}_{c(t)}$ for all $t\in I$, where $\mathcal{H}$ is the horizontal distribution associated to a given principal connection.
\end{definition}

In fact, if $q_{0}\in Q$ and $\bar{c}(t_{0})=\pi(q_{0})$ then there exists a unique horizontal lift $c:I\to Q$ of $\bar{c}:I\to \bar{Q}$ such that $c(t_0)=q_{0}$.
 \begin{lemma}\label{horizontal:lift:reparametrization}
     If a curve $c_{1}$ in $\bar{Q}$ is a reparametrization of a curve $c_{2}$ in $\bar{Q}$, then the horizontal lift of $c_{1}$ to $Q$ is a reparametrization of the horizontal lift of the curve $c_{2}$ to $Q$.
 \end{lemma}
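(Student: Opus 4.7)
The plan is to compose a horizontal lift of $c_{2}$ with the reparametrization function, show directly that the result is horizontal and projects to $c_{1}$, and then use the uniqueness of horizontal lifts stated just before the lemma to conclude that this must be the horizontal lift of $c_{1}$.

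More concretely, I would argue as follows. By hypothesis there is a smooth function $\sigma : I_{1} \to I_{2}$ (typically a diffeomorphism whose derivative has constant sign) such that $c_{1}(t) = c_{2}(\sigma(t))$ for every $t \in I_{1}$. Pick any horizontal lift $\tilde{c}_{2} : I_{2} \to Q$ of $c_{2}$ and define $\gamma := \tilde{c}_{2} \circ \sigma : I_{1} \to Q$. Then $\pi \circ \gamma = \pi \circ \tilde{c}_{2} \circ \sigma = c_{2} \circ \sigma = c_{1}$, so $\gamma$ projects to $c_{1}$. By the chain rule,
\[
\dot{\gamma}(t) = \sigma'(t)\,\dot{\tilde{c}}_{2}(\sigma(t)),
\]
and since $\dot{\tilde{c}}_{2}(\sigma(t)) \in \mathcal{H}_{\tilde{c}_{2}(\sigma(t))} = \mathcal{H}_{\gamma(t)}$ while $\mathcal{H}$ is a vector subbundle of $TQ$ (hence closed under scalar multiplication), $\dot{\gamma}(t) \in \mathcal{H}_{\gamma(t)}$ for all $t$. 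Therefore $\gamma$ is itself a horizontal lift of $c_{1}$, and by construction it is a reparametrization of $\tilde{c}_{2}$.

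To finish, I would invoke the uniqueness of horizontal lifts through a prescribed initial point recalled in the paragraph preceding the lemma: given any initial condition $q_{0} \in \pi^{-1}(c_{1}(t_{0}))$ for the horizontal lift of $c_{1}$, selecting the horizontal lift $\tilde{c}_{2}$ of $c_{2}$ with $\tilde{c}_{2}(\sigma(t_{0})) = q_{0}$ forces the unique horizontal lift of $c_{1}$ through $q_{0}$ to coincide with $\gamma = \tilde{c}_{2} \circ \sigma$. There is no serious technical obstacle here; the argument reduces to the linearity of $\mathcal{H}$ and the chain rule. The only point requiring some care is the bookkeeping of initial conditions, since horizontal lifts of a fixed base curve form a family parametrised by the starting point in the fibre, and one must match them consistently with $\sigma$ to identify the two reparametrized curves.
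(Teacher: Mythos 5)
Your proof is correct and takes essentially the same approach as the paper's: compose a horizontal lift with the reparametrization map, check via the chain rule and the fact that $\mathcal{H}$ is a subbundle that the result is horizontal and projects to the right base curve, and conclude by uniqueness of horizontal lifts through a prescribed initial point. The only cosmetic difference is that you lift $c_{2}$ and compose with $\sigma$ to produce the lift of $c_{1}$, whereas the paper lifts $c_{1}$ and composes with $\varphi$ to produce the lift of $c_{2}$.
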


 \begin{proof}
     Let $c_{1}:I \to \bar{Q}$, $\varphi:J\to I$ a reparametrization and $c_{2}:J\to \bar{Q}$ defined by $c_{2}(t)=(c_{1} \circ \varphi)(t)$ with $I=[s_{0}, s_{1}]$ and $J=[t_{0}, t_{1}]$.
     Denote by $c_{1}^{h}(t)$ the horizontal lift of the curve $c_{1}$ satisfying $c_{1}^{h}(s_{0}) = q\in Q$.
     The curve $\alpha(t) = (c_{1}^{h} \circ \varphi)(t)$ satisfies $\alpha(t_{0})=(c_{1}^{h} \circ \varphi)(t_0)=c_{1}^{h}(s_{0}) = q$, $\pi(\alpha(t))=(c_{1}\circ \varphi)(t)=c_{2}(t)$ and
     $$\dot{\alpha}(t)=\dot{\varphi}(t)\dot{c}_{1}^{h}(\varphi(t))\in \mathcal{H}_{\alpha(t)},$$
     since $\dot{c}_{1}^{h}( \varphi(t))\in \mathcal{H}_{\alpha(t)}$. Therefore, $\alpha(t)$ is the horizontal lift $c_{2}^{h}$ of $c_{2}:J\to \bar{Q}$ that passes through the point $q$.
 \end{proof}

 \subsection{Riemannian submersions}
	
	Let $(Q,g)$ and $(\bar{Q}, \bar{g})$ be Riemannian manifolds and the map $\pi:Q \rightarrow \bar{Q}$ a surjective submersion. Let $V\pi := \ker T\pi$ and $\D := V^{\bot}\pi$ denote the vertical and horizontal distributions, respectively. Then the map $\pi$ is said to be a Riemannian submersion if
	\begin{equation*}
		\bar{g}(T\pi (X), T\pi (Y)) = g(X, Y),
	\end{equation*}
	for $X, Y\in \D$ (see \cite{ON66,ON67,ON83} for more details).
	Since $\pi$ is a submersion, $T_{q}\pi:(\D_{q}, g(q)|_{\mathcal{D}\times \D})\rightarrow (T_{\pi(q)}\bar{Q}, \bar{g}(\pi(q)))$ is a linear isometry for all $q\in Q$. So, for each $X\in \mathfrak{X}(\bar{Q})$ there is a unique horizontal vector field $X^{h}\in \Gamma(\D)$ such that $T\pi(X^{h})=X$. The vector field $X^{h}$ is the horizontal lift of $X$ by the Riemannian submersion $\pi$.
	
	\begin{lemma}\label{metric:submersion:lemma}
		If $X, Y \in \mathfrak{X}(\bar{Q})$, then
		\begin{enumerate}
			\item $g(X^{h}, Y^{h}) = \bar{g}(X, Y) \circ \pi$.
			\item $\SP([X^{h}, Y^{h}]) = ([X, Y])^{h}$, where $\SP:TQ\rightarrow \D$ is the orthogonal projection.
			\item $\SP(\nabla^{g}_{X^{h}} Y^{h}) = (\nabla^{\bar{g}}_{X} Y)^{h}$, where $\nabla^{g}$ and $\nabla^{\bar{g}}$ are the Levi-Civita connections of $(Q,g)$ and $(\bar{Q},\bar{g})$, respectively.
		\end{enumerate}
	\end{lemma}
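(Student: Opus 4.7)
The plan is to prove the three items in order, since each relies on the previous one, and all are essentially consequences of the defining property of a Riemannian submersion together with the $\pi$-relatedness of horizontal lifts with their projections.

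For item (1), I would simply apply the definition of Riemannian submersion. By construction, $X^h$ and $Y^h$ are sections of $\D$, and $T\pi(X^h) = X\circ \pi$, $T\pi(Y^h) = Y\circ \pi$. The Riemannian submersion condition, evaluated pointwise at $q\in Q$, yields
\begin{equation*}
g(q)(X^h(q), Y^h(q)) = \bar{g}(\pi(q))(X(\pi(q)), Y(\pi(q))),
\end{equation*}
which is exactly the identity in (1).

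For item (2), I would use the standard fact that the Lie bracket of $\pi$-related vector fields is $\pi$-related. Since $X^h$ is $\pi$-related to $X$ and $Y^h$ is $\pi$-related to $Y$, the bracket $[X^h, Y^h]$ is $\pi$-related to $[X, Y]$, i.e.\ $T\pi([X^h, Y^h]) = [X, Y]\circ \pi$. Now decompose $[X^h, Y^h] = \SP([X^h, Y^h]) + (\text{vertical part})$. Applying $T\pi$ kills the vertical part and restricts to a linear isometry on $\D$, so $T\pi(\SP([X^h, Y^h])) = [X, Y]\circ \pi$. Since $\SP([X^h, Y^h])$ is horizontal by definition, it must equal the unique horizontal lift $[X, Y]^h$.

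For item (3), which is the main content of the lemma, I would use the Koszul formula for both metrics. For any $Z\in \mathfrak{X}(\bar{Q})$, Koszul gives
\begin{align*}
2 g(\nabla^{g}_{X^h} Y^h, Z^h) = {}& X^h(g(Y^h, Z^h)) + Y^h(g(X^h, Z^h)) - Z^h(g(X^h, Y^h)) \\
& + g([X^h, Y^h], Z^h) - g([Y^h, Z^h], X^h) - g([X^h, Z^h], Y^h).
\end{align*}
Using item (1), each of the three metric terms rewrites as $\bar{g}(\cdot, \cdot)\circ \pi$, and then $X^h(f\circ \pi) = X(f)\circ \pi$ converts the derivatives to derivatives on $\bar{Q}$. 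For the bracket terms, I would write each bracket as its horizontal part plus its vertical part; the vertical part pairs trivially with $Z^h$, $X^h$, $Y^h$ since those are horizontal, and by item (2) the horizontal part is the horizontal lift of the corresponding bracket on $\bar{Q}$, whose $g$-pairing with a horizontal lift equals the $\bar{g}$-pairing downstairs via item (1). Assembling these, the right-hand side equals $2\bar{g}(\nabla^{\bar{g}}_X Y, Z)\circ \pi = 2 g((\nabla^{\bar{g}}_X Y)^h, Z^h)$. Since this holds for every $Z\in \mathfrak{X}(\bar{Q})$ and the horizontal distribution is spanned at each point by horizontal lifts, non-degeneracy of $g$ on $\D$ forces $\SP(\nabla^{g}_{X^h} Y^h) = (\nabla^{\bar{g}}_X Y)^h$.

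The only mildly subtle point is the bookkeeping in the Koszul computation, where one has to consistently separate horizontal and vertical parts of $[X^h, Y^h]$, $[Y^h, Z^h]$ and $[X^h, Z^h]$; the rest is a direct application of items (1) and (2).
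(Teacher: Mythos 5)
Your proof is correct. The paper does not actually prove this lemma --- its ``proof'' is only a citation to O'Neill's papers --- and your argument (the defining property of a Riemannian submersion for item (1), $\pi$-relatedness of Lie brackets plus the horizontal/vertical splitting for item (2), and the Koszul formula combined with items (1) and (2) for item (3)) is exactly the standard proof contained in those references, so it matches the intended approach.
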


	\begin{proof}
		See \cite{ON66,ON67,ON83}.
	\end{proof}

	A simple corollary of the previous lemma is that geodesics $c:I\rightarrow Q$ satisfying $\dot{c}(t)\in \D_{c(t)}$ project under $\pi$ to geodesics in $\bar{Q}$.
	
	Moreover, the tangent lift of geodesics in $Q$ with initial velocity in $\D$ always remain in $\D$. We include a proof of this fact for completeness. We will use the orthogonal projections $\SP:TQ \rightarrow \D$ and $\mathcal{Q}:TQ \rightarrow V\pi$ in the proof of the following theorem.
	
	\begin{proposition}\label{submersion:geodesics}
		Let $\pi:Q \rightarrow \bar{Q}$ be a Riemannian submersion with $\D=V^{\bot}\pi$ and $V\pi=\ker T\pi$ the horizontal and vertical distributions. If $c$ is a geodesic of $Q$ which is horizontal at one point, then it is always horizontal. In fact, the horizontal lift of a geodesic in $(\bar{Q},\bar{g})$ is a geodesic for the Riemannian manifold $(Q,g)$.
	\end{proposition}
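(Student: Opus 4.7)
The plan is to reduce the whole proposition to the single assertion that the horizontal lift of a geodesic in $(\bar{Q},\bar{g})$ is a geodesic in $(Q,g)$, and then use uniqueness of geodesics with prescribed initial conditions. Concretely, suppose $c:I\to Q$ is a geodesic with $\dot c(0)\in\D_{c(0)}$. Set $\bar v:=T_{c(0)}\pi(\dot c(0))\in T_{\pi(c(0))}\bar Q$, let $\bar\gamma$ be the geodesic of $\bar g$ with $\dot{\bar\gamma}(0)=\bar v$, and let $\gamma$ be the horizontal lift of $\bar\gamma$ starting at $c(0)$. Once I know $\gamma$ is a geodesic of $g$, the two curves $c$ and $\gamma$ share the same initial position and the same initial velocity (since $\dot c(0)$ is horizontal and projects to $\bar v$), so uniqueness of geodesics forces $c=\gamma$; in particular $c$ is horizontal throughout.

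To prove the horizontal lift assertion I would work locally and extend $\dot{\bar c}$ to a vector field $\bar X\in\mathfrak X(\bar Q)$ on a neighbourhood of the image of $\bar c$. Then $\bar X^h$ is a basic horizontal vector field on $Q$ whose restriction to the horizontal lift $c$ coincides with $\dot c$, so it suffices to show that $\nabla^{g}_{\bar X^h}\bar X^h=0$ along $c$. I would check the horizontal and vertical components separately. The horizontal component is handled directly by item 3 of Lemma \ref{metric:submersion:lemma}:
\[
\SP\!\bigl(\nabla^{g}_{\bar X^h}\bar X^h\bigr)=(\nabla^{\bar g}_{\bar X}\bar X)^h,
\]
which vanishes along $c$ because $\bar c$ is a geodesic of $\bar g$.

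For the vertical component I would evaluate $g(\nabla^{g}_{\bar X^h}\bar X^h,V)$ against an arbitrary vertical field $V\in\Gamma(V\pi)$ via the Koszul formula
\[
2g(\nabla^{g}_{\bar X^h}\bar X^h,V)=2\bar X^h\!\cdot\! g(\bar X^h,V)-V\!\cdot\! g(\bar X^h,\bar X^h)-2g([\bar X^h,V],\bar X^h).
\]
The first term vanishes since $\bar X^h$ is horizontal and $V$ vertical, so $g(\bar X^h,V)=0$. The second term vanishes because item 1 of Lemma \ref{metric:submersion:lemma} gives $g(\bar X^h,\bar X^h)=\bar g(\bar X,\bar X)\circ\pi$, a function constant on fibres, which is annihilated by any vertical derivation. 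The third term vanishes once one observes that $[\bar X^h,V]$ is $\pi$-related to $[\bar X,0]=0$ and is therefore vertical, so it is $g$-orthogonal to the horizontal vector $\bar X^h$.

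The main obstacle is the vertical component computation; the crucial ingredient is the standard "basic vector field" observation that the bracket of a vertical field with a basic horizontal field is again vertical, which is what forces all three surviving Koszul terms to collapse. Everything else is a routine use of Lemma \ref{metric:submersion:lemma} and the uniqueness of geodesics.
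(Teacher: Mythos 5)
Your proof is correct, but it takes a genuinely different route from the paper. You prove the second assertion first---that the horizontal lift of a base geodesic is a geodesic upstairs---by an infinitesimal computation: extend $\dot{\bar c}$ to a basic field $\bar X$, kill the horizontal component of $\nabla^{g}_{\bar X^h}\bar X^h$ with item 3 of Lemma \ref{metric:submersion:lemma}, and kill the vertical component with the Koszul formula plus the observation that $[\bar X^h,V]$ is $\pi$-related to $[\bar X,0]=0$ and hence vertical; uniqueness of geodesics then delivers the first assertion. The paper instead argues variationally: it compares lengths, $\ell(c)\geqslant\ell(\bar c)=\ell(\bar c^{\,h})$, and invokes the local length-minimizing property of the geodesic $c$ to conclude $c=\bar c^{\,h}$ for small time. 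Your approach is purely local, avoids any appeal to minimization, and in fact establishes the stronger pointwise identity $\nabla^{g}_{\bar X^h}\bar X^h=(\nabla^{\bar g}_{\bar X}\bar X)^h$ for basic fields, which is precisely the O'Neill identity quoted in the remark following the proposition (note that your Koszul computation shows the vertical part $\tfrac12\mathcal{Q}[\bar X^h,\bar X^h]$ vanishes trivially here). The paper's argument is shorter and more geometric, but it needs the extra care that the comparison curve $\bar c^{\,h}$ a priori only ends in the fiber over $\pi(c(h))$ rather than at $c(h)$ itself, a point your argument sidesteps entirely. The only technical caveat in your write-up---that extending $\dot{\bar c}$ to a neighbourhood requires the geodesic to be locally embedded, and that the conclusion must be propagated from small parameter intervals to all of $I$---is routine and handled by the uniqueness of geodesics you already invoke.
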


	\begin{proof}
		Let $c:[0, h]\rightarrow Q$ be a geodesic of $Q$ satisfying $c(0) = q_{0}, \ \dot{c}(0)\in\D_{q_{0}}$. Consider the projection $\bar{c}:[0,h]\rightarrow \bar{Q}$ defined by $\bar{c} = \pi \circ c$. In the following note that $\dot{c}$ might be written as
		$$\dot{c} = \SP(\dot{c}) + \mathcal{Q}(\dot{c}),$$
		that is the sum of the projection to the horizontal and vertical spaces. The length of the curve $c$ is the functional
		$$\ell (c) = \int_{0}^{h} \|\dot{c} \| \ dt,$$
		and if $h$ is sufficiently small it is the Riemannian distance between $q_{0}:=c(0)$ and $q_{1}:=c(h)$. Attending to the decomposition of $\dot{c}$ we have that
		$$\ell (c)\geqslant \int_{0}^{h} \|\SP(\dot{c}) \| \ dt = \int_{0} ^{h} \| T\pi \circ \SP(\dot{c})\| \ dt,$$
		where the second equality comes from the fact that the metric preserves the inner product of horizontal tangent vectors. Since vertical vectors are by definition in the kernel of $T\pi$, we have that $T\pi(\SP(\dot{c})) = T\pi(\dot{c})$. Therefore, we conclude that
		$$\ell (c) \geqslant \int_{0}^{h} \|(\pi \circ c)'\| \ dt = \ell(\bar{c}).$$
		Denote by $\bar{c}^{h}:[0,h]\rightarrow Q$ the horizontal lift of the curve $\bar{c}$ to the point $q_{0}$. Since $\bar{c}^{h}$ is an horizontal curve, we can easily deduce that $\ell (\bar{c}^{h}) = \ell (\bar{c})$. Therefore $\bar{c}^{h}$ is a curve on $Q$ joining $q_{0}$ and $q_{1}$ and satisfying
		$$\ell (c) \geqslant \ell (\bar{c}^{h}).$$
		Since $c$ is a geodesic we conclude that at least for sufficiently small $h$, $c=\bar{c}^{h}$.
	\end{proof}
	\david{
\begin{remark}
The geodesic relation in Proposition \ref{submersion:geodesics} is also a direct consequence of the following equation:
\[
\nabla^{g}_{X^{h}} Y^{h} = (\nabla^{\bar{g}}_{X} Y)^{h}+\frac{1}{2}
{\mathcal Q}[X^h, Y^h]
\]
(see Lemmas 2 and 3 in \cite{ON66}).   
\end{remark}
    }
	\subsection{Generalized Chaplygin system}\label{chaplygin}
	
	A \textit{generalized Chaplygin system} (\cite{koiller92,Bloch, BKMM96, Cortes}) is a nonholonomic system whose configuration manifold $Q$ is a principal $G$-bundle $\pi:Q\rightarrow Q/G$, the constraint distribution $\D$ determines a principal connection on the bundle with connection form $\omega$ and the Lagrangian function $L:TQ\rightarrow\R$ is a $G$-invariant regular function for the \textit{lifted action} of $G$ on $TQ$, i.e., $$L((T_{q}\Phi_{g}) (v))=L(v), \ \ \ \forall \ v\in T_{q}Q.$$
	
	The \textit{lifted action} is the map $\Phi^{TQ}:G\times TQ\rightarrow TQ$, defined by $\Phi^{TQ}(g,v)=(T_{q} \Phi_{g}) (v)$. If the action is free and proper then the lifted action also is so. 
	
	
	
	On the space $\bar{Q}=Q/G$, one defines a connection (see \cite{koiller92}):
	\begin{equation}\label{reduced:nonholonomic:connection}
		\bar{\nabla}_{\bar{X}} \bar{Y} = T\pi (\nabla^{nh}_{\bar{X}^{h}} \bar{Y}^{h}). \quad \text{for } X, Y \in \mathfrak{X}(\bar{Q}).
	\end{equation}

    Here, $\bar{X}^{h}$, $\bar{Y}^{h}$ are the horizontal lifts of the vector fields $\bar{X}$, $\bar{Y}$, respectively, with respect to the principal connection $\omega$, that is, $\omega(\bar{X}^{h})=\omega(\bar{Y}^{h})=0$ and $\bar{X}^{h}$, $\bar{Y}^{h}$ are $\pi$-projectable over $\bar{X}$, $\bar{Y}$, respectively.
	\begin{proposition}\label{nh:geodesics:horizontal}
		The geodesics of $\nabla^{nh}$ starting in $\D$ project onto geodesics of $\bar{\nabla}$. Conversely, let $\bar{c}$ be a geodesic of $\bar{\nabla}$ and choose $q\in Q$ such that $\pi(q) = \bar{c}(0)$. Then, the geodesic of $\nabla^{nh}$ starting in $q$ is the horizontal lift of $\bar{c}$ with respect to the principal connection $\omega$.
	\end{proposition}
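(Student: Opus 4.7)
The proof splits into the direct statement and its converse, both hinging on the defining formula $\bar\nabla_{\bar X}\bar Y = T\pi(\nabla^{nh}_{\bar X^h}\bar Y^h)$, on the standard reduction trick of extending a velocity curve to a local vector field on $\bar Q$, and on one structural property of $\nabla^{nh}$: if $Y\in\Gamma(\D)$, then $\nabla^{nh}_X Y\in\Gamma(\D)$. This property follows by unpacking the formula $\nabla^{nh}_X Y = \nabla^g_X Y + (\nabla_X\mathcal{Q})Y$: when $\mathcal{Q}(Y)=0$ one has $(\nabla_X\mathcal{Q})Y = -\mathcal{Q}(\nabla^g_X Y)$, so $\nabla^{nh}_X Y = \SP(\nabla^g_X Y)\in\Gamma(\D)$.

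For the direct implication, let $c$ be a geodesic of $\nabla^{nh}$ with $\dot c(0)\in\D$. Since $\Gamma_{(g,\D)}$ is a vector field on $\D$, we have $\dot c(t)\in\D_{c(t)}$ for every $t$, so $c$ is horizontal with respect to the principal connection. Setting $\bar c = \pi\circ c$ and locally extending $\dot{\bar c}$ to a vector field $\bar X\in\mathfrak{X}(\bar Q)$, one gets $\dot c(t) = \bar X^h(c(t))$ along $c$. Pushing the geodesic equation $(\nabla^{nh}_{\bar X^h}\bar X^h)(c(t)) = 0$ forward by $T\pi$ and using the definition of $\bar\nabla$ yields $(\bar\nabla_{\bar X}\bar X)(\bar c(t)) = 0$, i.e., $\bar c$ is a geodesic of $\bar\nabla$.

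For the converse, let $\bar c$ be a geodesic of $\bar\nabla$ and $c$ its horizontal lift through $q$, so that $\dot c(t)\in\D_{c(t)}$. With the same local extension $\bar X$ of $\dot{\bar c}$, we again have $\dot c(t) = \bar X^h(c(t))$. By the definition of $\bar\nabla$ and the geodesic equation for $\bar c$, $T\pi\bigl((\nabla^{nh}_{\bar X^h}\bar X^h)(c(t))\bigr) = 0$, so this vector lies in $V_{c(t)}\pi$. By the constraint-preservation property above, it also lies in $\D_{c(t)}$. Since $TQ = \D\oplus V\pi$ from the principal connection, we conclude $(\nabla^{nh}_{\bar X^h}\bar X^h)(c(t)) = 0$, so $c$ is a nonholonomic geodesic. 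Uniqueness of the initial-value problem for $\Gamma_{(g,\D)}$ then identifies it with the geodesic of $\nabla^{nh}$ issuing from $q$ whose initial velocity is the horizontal lift of $\dot{\bar c}(0)$.

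The only mildly subtle ingredient is the constraint-preservation identity $\nabla^{nh}_X Y\in\Gamma(\D)$ for $Y\in\Gamma(\D)$; without it, the converse argument would only place $(\nabla^{nh}_{\bar X^h}\bar X^h)(c(t))$ in $V_{c(t)}\pi$, which would not suffice to conclude that it vanishes. Everything else is routine bookkeeping around the definition of the induced connection $\bar\nabla$.
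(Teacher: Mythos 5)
Your proof is correct and rests on exactly the ingredient the paper itself invokes: the remark following the proposition states that the result "is a direct consequence" of $\D$ being geodesically invariant for $\nabla^{nh}$, and your constraint-preservation identity $\nabla^{nh}_X Y=\SP(\nabla^g_X Y)\in\Gamma(\D)$ for $Y\in\Gamma(\D)$ is precisely that fact, here proved rather than merely cited. The remaining steps (extending the velocity field, projecting via the defining formula for $\bar\nabla$, and using $TQ=\D\oplus V\pi$ in the converse) supply the routine details the paper omits, so the approach is essentially the same, just written out in full.
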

\david{The proof of  Proposition \ref{nh:geodesics:horizontal} is a direct consequence that ${\mathcal D}$ is geodesically invariant with respect to the nonholonomic affine connection $\nabla^{nh}$.}
	
	\section{Main result: The modified metric}
	
	Throughout this section suppose that $(Q,g)$ is a Riemannian manifold, $\D$ is a distribution on $Q$ and consider the kinetic nonholonomic system defined by the Lagrangian function $L_{g}$ and also by the distribution $\D$.
	
	In addition, suppose that the nonholonomic system $(L_{g}, \D)$ is a generalized Chaplygin system with respect to the free and proper action of a Lie group $G$ given by the map $\Phi:G \times Q \rightarrow Q$. This means that the Riemannian metric $g$ is $G$-invariant and $\D$ is the horizontal subbundle associated with a principal connection $\omega$.

    
    Denote by $\bar{Q}$ the reduced space $Q/G$ and by $\pi:Q\rightarrow \bar{Q}$ the projection. Under this hypothesis, and following the discussion in Section \ref{chaplygin}, the nonholonomic vector field reduces to a vector field on $T\bar{Q}$. We will consider on $\bar{Q}$ the Riemannian metric $\bar{g}$ defined by
    $$\bar{g}(\bar{X}, \bar{Y}) \circ \pi = g(\bar{X}^{h}, \bar{Y}^{h}), \text{ for } \bar{X}, \bar{Y} \in \mathfrak{X}(\bar{Q}).$$

Define the gyroscopic tensor $\mathcal{T}$ as a $(1,2)$-tensor field $\mathcal{T}:\mathfrak{X}(\bar{Q}) \times \mathfrak{X}(\bar{Q}) \rightarrow \mathfrak{X}(\bar{Q})$  by
$$\mathcal{T}(\bar{X}, \bar{Y})(\pi(q))=T_{q}\pi\left({\mathcal P}[\bar{X}^h, \bar{Y}^h](q)\right)- [\bar{X}, \bar{Y}](\pi(q)).$$
On the reduced space $\bar{Q}$, take coordinates $(\bar{q}^{a})$. 
The gyroscopic tensor is given in the previous coordinates by
    $$\mathcal{T}\left( \frac{\partial }{\partial \bar{q}^{a}},\frac{\partial }{\partial \bar{q}^{b}} \right) = C_{a b}^{c}(\bar{q}) \frac{\partial }{\partial \bar{q}^{c}}$$
    where $C_{ab}^{c}$ is given by
    $$\SP \left(\left[ \left( \frac{\partial }{\partial \bar{q}^{a}} \right)^{h},\left( \frac{\partial }{\partial \bar{q}^{b}} \right)^{h} \right] \right) = C_{ab}^{c}(\bar{q}) \left( \frac{\partial }{\partial \bar{q}^{c}} \right)^{h},$$ with $\mathcal{P}:TQ\to \D$ the orthogonal projection (see \cite{GaMa} and the references therein). Suppose that $\Gamma_{(g,\D)} \in \mathfrak{X}(\D)$ is the vector field on $\D$ whose trajectories are the nonholonomic trajectories of the Chaplygin system associated with the Lagrangian function $L_{g}$. Denote by $\Omega_{nh}$ the almost symplectic 2-form on $T^{*}\bar{Q}$ given by
    $$\Omega_{nh} = \omega_{\bar{Q}} + \Omega_{\mathcal{T}},$$
    where
    $$\Omega_{\mathcal{T}}(\alpha)(U,V) = \alpha \left( \mathcal{T}\left( T_{\alpha} \pi_{\bar{Q}}(U), T_{\alpha} \pi_{\bar{Q}}(V)
 \right) \right), \text{ for } \alpha\in T^*\bar{Q}, \ U, V \in T_{\alpha}(T^{*} \bar{Q}),$$ $\pi_{\bar{Q}}:T^{*}\bar{Q}\rightarrow \bar{Q}$ is the canonical projection and $\omega_{\bar{Q}}$ denotes the canonical symplectic form on $T^{*}\bar{Q}$. Denote also by $X_{nh}\in \mathfrak{X}(T^{*}\bar{Q})$ the vector field which is obtained as the projection of $\Gamma_{(g,\D)}$ by the smooth map $$\flat_{\bar{g}}\circ T\pi:\D\to T\bar{Q} \to T^{*}\bar{Q},$$
 where $\flat_{\bar{g}}:T\bar{Q} \rightarrow T^{*}\bar{Q}$ is the musical isomorphism associated with the metric $\bar{g}$. $X_{nh}$ is characterized by the following condition
 $$i_{X_{nh}}\Omega_{nh} = dH_{\bar{g}},$$
 with $H_{\bar{g}}:T^{*}\bar{Q}\rightarrow \R$ being the kinetic energy induced by the Riemannian metric $\bar{g}$, that is,
 \begin{equation}\label{kinetic:hamiltonian}
     H_{\bar{g}}(\alpha) = \frac{1}{2}\bar{g}(\sharp_{\bar{g}}(\alpha), \sharp_{\bar{g}}(\alpha)),
 \end{equation}
with $\sharp_{\bar{g}}:T^{*}\bar{Q} \rightarrow T\bar{Q}$ the inverse musical isomorphism $\sharp_{\bar{g}}= (\flat_{\bar{g}})^{-1}$. This almost symplectic 2-form can be used to define in a natural way an associated linear almost Poisson structure (for more details see \cite{GaMa}).

\begin{definition}[\cite{GaMa}]
    A $\phi$-simple system, with $\phi\in C^{\infty}(\bar{Q})$, is a nonholonomic Chaplygin system for which the gyroscopic system satisfies
    \begin{equation*}
        \mathcal{T}(Y,Z)=Z(\phi)Y - Y(\phi)Z, \ \forall \ Y, Z \in \mathfrak{X}(\bar{Q}).
    \end{equation*}
\end{definition}

    $\phi$-simple systems have the remarkable property that the 2-form
    $$\Omega = e^{(\phi\circ \pi_{\bar{Q}})} \Omega_{nh}$$
    is a symplectic structure (see \cite{GaMa}).

    \begin{proposition}\label{vector:bundle:isomorphism}
        If a Chaplygin system is $\phi$-simple then the vector bundle isomorphism $\psi:T^{*}\bar{Q}\rightarrow T^{*}\bar{Q}$ over the identity of $\bar{Q}$ given by
        $$\psi(\alpha_{\bar{q}})=e^{\phi(\pi_{\bar{Q}}(\alpha_{\bar{q}}))} \alpha_{\bar{q}}, \text{ for } \alpha_{\bar{q}}\in T_{\bar{q}}^{*}\bar{Q}$$
        satisfies
        $$\psi^{*}\omega_{\bar{Q}} = \Omega$$
        and
    \begin{equation}\label{hamiltonian:relation}
        H_{g_{can}} = H_{\bar{g}} \circ \psi^{-1},
    \end{equation}
    where $g_{can}$ is the Riemannian metric defined by
    \begin{equation}\label{canonical:reduced:metric}
        g_{can}=e^{2\phi} \bar{g},
    \end{equation}
    and $H_{g_{can}}$ is the kinetic energy on $T^{*}\bar{Q}$ associated with $g_{can}$.
    \end{proposition}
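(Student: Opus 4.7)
The plan is to verify the two claims of Proposition \ref{vector:bundle:isomorphism} independently, reducing each to a direct computation that exploits the fiberwise scaling nature of $\psi$. For the symplectic identity, I would first pass through the canonical (Liouville) 1-form $\theta_{\bar{Q}}$ on $T^{*}\bar{Q}$. Because $\psi$ is multiplication on fibers by $e^{\phi\circ\pi_{\bar{Q}}}$ (a function pulled back from the base), and because $\theta_{\bar{Q}}(\alpha)(U)=\alpha(T_{\alpha}\pi_{\bar{Q}}(U))$, a short computation gives $\psi^{*}\theta_{\bar{Q}} = e^{\phi\circ\pi_{\bar{Q}}}\,\theta_{\bar{Q}}$. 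Taking $-d$ of both sides and using $\omega_{\bar{Q}}=-d\theta_{\bar{Q}}$ yields
$$\psi^{*}\omega_{\bar{Q}} \;=\; e^{\phi\circ\pi_{\bar{Q}}}\bigl(\omega_{\bar{Q}} \;-\; \pi_{\bar{Q}}^{*}d\phi\wedge\theta_{\bar{Q}}\bigr).$$

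The second step is to identify the correction term with $e^{\phi\circ\pi_{\bar{Q}}}\Omega_{\mathcal{T}}$. Using the $\phi$-simple hypothesis $\mathcal{T}(Y,Z)=Z(\phi)Y-Y(\phi)Z$ in the definition of $\Omega_{\mathcal{T}}$, for $U,V\in T_{\alpha}(T^{*}\bar{Q})$ and writing $\bar{U}=T_{\alpha}\pi_{\bar{Q}}(U)$, $\bar{V}=T_{\alpha}\pi_{\bar{Q}}(V)$, we get
$$\Omega_{\mathcal{T}}(\alpha)(U,V)=\alpha\bigl(\bar{V}(\phi)\bar{U}-\bar{U}(\phi)\bar{V}\bigr)=d\phi(\bar{V})\theta_{\bar{Q}}(U)-d\phi(\bar{U})\theta_{\bar{Q}}(V),$$
which is exactly $-\bigl(\pi_{\bar{Q}}^{*}d\phi\wedge\theta_{\bar{Q}}\bigr)(U,V)$. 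Substituting back, we obtain $\psi^{*}\omega_{\bar{Q}}=e^{\phi\circ\pi_{\bar{Q}}}(\omega_{\bar{Q}}+\Omega_{\mathcal{T}})=\Omega$, as required.

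For the Hamiltonian identity, I would exploit that $\sharp_{\bar{g}}$ is fiberwise linear, so that $\sharp_{\bar{g}}(\psi^{-1}(\beta_{\bar{q}}))=e^{-\phi(\bar{q})}\sharp_{\bar{g}}(\beta_{\bar{q}})$. Inserting this into \eqref{kinetic:hamiltonian} gives $H_{\bar{g}}(\psi^{-1}(\beta_{\bar{q}}))=\tfrac{1}{2}e^{-2\phi(\bar{q})}\bar{g}(\sharp_{\bar{g}}\beta_{\bar{q}},\sharp_{\bar{g}}\beta_{\bar{q}})$. Since the cotangent metric induced by $g_{can}=e^{2\phi}\bar{g}$ scales inversely, the kinetic energy $H_{g_{can}}(\beta_{\bar{q}})=\tfrac{1}{2}(g_{can})^{*}(\beta_{\bar{q}},\beta_{\bar{q}})$ equals $\tfrac{1}{2}e^{-2\phi(\bar{q})}\bar{g}^{*}(\beta_{\bar{q}},\beta_{\bar{q}})$, which is the same expression. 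This establishes \eqref{hamiltonian:relation}.

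The only genuinely non-formal step is the identification $\Omega_{\mathcal{T}}=-\pi_{\bar{Q}}^{*}d\phi\wedge\theta_{\bar{Q}}$, which is really where the $\phi$-simple hypothesis is used and which converts the exact 1-form $d(e^{\phi\circ\pi_{\bar{Q}}}\theta_{\bar{Q}})$ into the right correction to $\omega_{\bar{Q}}$. Everything else is a straightforward consequence of the fact that $\psi$ is a fiberwise homothety with a conformal factor depending only on the base point.
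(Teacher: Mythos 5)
Your proof is correct and follows essentially the same route as the paper: both compute $\psi^{*}\theta_{\bar{Q}}=e^{\phi\circ\pi_{\bar{Q}}}\theta_{\bar{Q}}$, take $-d$, and match the resulting correction term with $e^{\phi\circ\pi_{\bar{Q}}}\Omega_{\mathcal{T}}$ via the identity $\Omega_{\mathcal{T}}=\theta_{\bar{Q}}\wedge d(\phi\circ\pi_{\bar{Q}})$, and both treat \eqref{hamiltonian:relation} as a direct fiberwise scaling computation. The only difference is that the paper imports that identity from Theorem 3.21 and Lemma 3.24 of the cited reference, whereas you derive it directly from the definition of $\Omega_{\mathcal{T}}$ and the $\phi$-simple hypothesis, which makes the argument self-contained but does not change its substance.
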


    \begin{proof}
        From Theorem 3.21 and Lemma 3.24 in \cite{GaMa}, a system is $\phi$-simple if and only if 
        $$\Omega_{\mathcal{T}}=\theta_{\bar{Q}}\wedge d(\phi\circ\pi_{\bar{Q}}),$$
        where $\theta_{\bar{Q}}$ is the Liouville $1$-form on $T^{*}Q$. This implies that
        $$\Omega = e^{(\phi\circ\pi_{\bar{Q}})} \omega_{\bar{Q}} + e^{(\phi\circ\pi_{\bar{Q}})} \theta_{\bar{Q}}\wedge d(\phi\circ\pi_{\bar{Q}}).$$

        On the other hand, it is easy to show that
        $$\psi^{*} \theta_{\bar{Q}} =e^{(\phi\circ\pi_{\bar{Q}})} \theta_{\bar{Q}}.$$
        From here, it follows that $$\psi^{*}\omega_{\bar{Q}} = -d(\psi^{*} \theta_{\bar{Q}}) = e^{(\phi\circ\pi_{\bar{Q}})} \omega_{\bar{Q}} + e^{(\phi\circ\pi_{\bar{Q}})} \theta_{\bar{Q}}\wedge d(\phi\circ\pi_{\bar{Q}})=\Omega.$$

        In addition, a direct computation using the definitions of $H_{g_{can}}$ and $H_{\bar{g}}$ proves the equality in \eqref{hamiltonian:relation}.
    \end{proof}

    Now, we can prove the following result.
    \begin{corollary}\label{reparametrization:corollary}
        If $X_{g_{can}}\in \mathfrak{X}(T^{*}\bar{Q})$ is the geodesic vector field of $g_{can}$ in $T^{*}\bar{Q}$ then the vector fields $e^{-(\phi\circ\pi_{\bar{Q}})}X_{nh}$ and $X_{g_{can}}$ are $\psi$-related. So, the trajectories of $X_{nh}$ are reparametrizations of geodesics with respect to the metric $g_{can}$.
    \end{corollary}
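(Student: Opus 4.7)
The strategy is to verify directly that the vector field $Y := e^{-(\phi\circ\pi_{\bar Q})}X_{nh}$ on $T^{*}\bar Q$ is $\psi$-related to the Hamiltonian vector field $X_{g_{can}}$ of $H_{g_{can}}$ with respect to the canonical symplectic form $\omega_{\bar Q}$. To this end I would pull back the defining equation $i_{X_{g_{can}}}\omega_{\bar Q}=dH_{g_{can}}$ via the diffeomorphism $\psi$ and use the two identities established in Proposition \ref{vector:bundle:isomorphism}, namely $\psi^{*}\omega_{\bar Q}=\Omega=e^{(\phi\circ\pi_{\bar Q})}\Omega_{nh}$ and $H_{\bar g}=H_{g_{can}}\circ\psi$.

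Concretely, the key computation is
\[
i_{Y}(\psi^{*}\omega_{\bar Q})=e^{-(\phi\circ\pi_{\bar Q})}\,i_{X_{nh}}\bigl(e^{(\phi\circ\pi_{\bar Q})}\Omega_{nh}\bigr)=i_{X_{nh}}\Omega_{nh}=dH_{\bar g}=\psi^{*}dH_{g_{can}},
\]
where the first equality uses that $e^{\pm(\phi\circ\pi_{\bar Q})}$ are scalar functions and commute with contraction, the third is the defining equation for $X_{nh}$, and the fourth uses $H_{\bar g}=H_{g_{can}}\circ\psi$. Since $\psi$ is a diffeomorphism and $\omega_{\bar Q}$ is non-degenerate, this equality pulled back through $\psi$ is equivalent to $i_{\psi_{*}Y}\omega_{\bar Q}=dH_{g_{can}}$, which forces $\psi_{*}Y=X_{g_{can}}$. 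This is exactly the $\psi$-relatedness assertion.

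For the second sentence of the corollary I would argue as follows. By the standard fact on $\varphi$-relatedness of vector fields, the integral curves of $Y$ and $X_{g_{can}}$ are mapped to one another by $\psi$; in particular, since $\psi$ projects to the identity on $\bar Q$, the curves on $\bar Q$ obtained by projecting integral curves of $Y$ coincide with the projections of geodesics of $g_{can}$. Finally, because $e^{-(\phi\circ\pi_{\bar Q})}$ is a strictly positive smooth function, $Y$ is a positive rescaling of $X_{nh}$, so the integral curves of $Y$ are time-reparametrizations of the integral curves of $X_{nh}$, and hence trajectories of $X_{nh}$ are reparametrizations of geodesics of $g_{can}$.

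I do not anticipate any real obstacle: everything follows by combining the two identities of Proposition \ref{vector:bundle:isomorphism} with the Hamiltonian characterizations of $X_{nh}$ and $X_{g_{can}}$. The only point that deserves a little care is to record that contracting a rescaled vector field against a rescaled two-form cancels the conformal factors, and to note the positivity of $e^{-\phi\circ\pi_{\bar Q}}$, which is what converts the statement about $\psi$-relatedness of $Y$ into an honest reparametrization statement for $X_{nh}$.
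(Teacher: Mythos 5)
Your proposal is correct and follows essentially the same route as the paper: both establish $\psi_{*}\bigl(e^{-(\phi\circ\pi_{\bar Q})}X_{nh}\bigr)=X_{g_{can}}$ by transporting the defining equation of $X_{nh}$ through $\psi$ using the identities $\psi^{*}\omega_{\bar Q}=\Omega$ and $H_{g_{can}}=H_{\bar g}\circ\psi^{-1}$ from Proposition \ref{vector:bundle:isomorphism}, and then conclude via $\pi_{\bar Q}\circ\psi=\pi_{\bar Q}$ and the positivity of the conformal factor. The only cosmetic difference is that you verify $i_{e^{-(\phi\circ\pi_{\bar Q})}X_{nh}}\Omega=dH_{\bar g}$ by cancelling the conformal factors directly, where the paper cites this identity from the literature.
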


    \begin{proof}
        From Theorems 3.11 and 3.21 in \cite{GaMa}, we have that the dynamical system $X_{nh}$ is Hamiltonizable up to reparametrization. In fact, it satisfies
        $$i_{e^{-(\phi\circ\pi_{\bar{\footnotesize Q}})}X_{nh}}\Omega = dH_{\bar{g}}.$$
        Using the previous proposition, we conclude that
        $$i_{e^{-(\phi\circ\pi_{\bar{\footnotesize Q}})}X_{nh}}\psi^{*}\omega_{\bar{Q}} = dH_{\bar{g}},$$
        which leads to
        $$i_{\left( e^{-(\phi\circ\pi_{\bar{Q}})} T\psi \circ X_{nh} \circ \psi^{-1}\right)} \omega_{\bar{Q}} = d(H_{\bar{g}}\circ \psi^{-1}).$$

        Thus, using again Proposition \ref{vector:bundle:isomorphism}, it follows that the vector field $e^{-(\phi\circ\pi_{\bar{Q}})} T\psi \circ X_{nh} \circ \psi^{-1} \in \mathfrak{X}(T^{*}\bar{Q})$ is just $X_{g_{can}}$.

        This implies that
        $$X_{g_{can}}\circ \psi = T\psi \circ \left( e^{-(\phi\circ\pi_{\bar{Q}})} X_{nh} \right) $$
        and the vector fields $e^{-(\phi\circ\pi_{\bar{Q}})} X_{nh}$ and $X_{g_{can}}$ are $\psi$-related. So, if $\bar{\gamma}:I\to T^*\bar{Q}$ is an integral curve of $e^{-(\phi\circ\pi_{\bar{Q}})} X_{nh}$ then $\psi\circ \bar{\gamma}: I\to T^*\bar{Q}$ is an integral curve
        of $X_{g_{can}}$. Thus, using that the trajectories of $e^{-(\phi\circ\pi_{\bar{Q}})} X_{nh}$ and $X_{g_{can}}$ are the
        $\pi_{\bar{Q}}$-projections of their integral curves and the fact that $\pi_{\bar{Q}} \circ \psi = \pi_{\bar{Q}}$, we conclude that the trajectories of $e^{-(\phi\circ\pi_{\bar{Q}})} X_{nh}$ are just the geodesics of the metric $g_{can}$. Finally , since the trajectories of $e^{-(\phi\circ\pi_{\bar{Q}})} X_{nh}$ are reparametrizations of the trajectories of $X_{nh}$, we deduce the result.
    \end{proof}

    Next, we will prove the main result of the paper.
    \begin{theorem}\label{extension:main:thm}
        Let $(Q,g, G, \D)$ be a kinetic Chaplygin system and $\pi:Q\rightarrow \bar{Q}$ the projection onto the quotient space. If the system is $\phi$-simple, there exists a Riemannian metric $h$ on $Q$ such that all its geodesics with initial velocity in $\D$ are reparametrizations of the nonholonomic trajectories associated with $(L_{g}, \D)$. In fact, if $V\pi=\ker T\pi$ and $g_{can}$ is the Riemannian metric on $\bar{Q}$ defined by $g_{can}=e^{2\phi}\bar{g}$ then $h$ may be chosen as follows
        \begin{enumerate}
			\item $h(\bar{X}^{h}, \bar{Y}^{h}) = g_{can}(\bar{X}, \bar{Y}), \forall \bar{X}, \bar{Y}\in \mathfrak{X}(\bar{Q})$;
			\item $h(Z, W) = g(Z, W), \forall Z,W\in \Gamma(V\pi)$;
			\item $h(\bar{X}^{h}, Z)= 0, \forall \bar{X}\in \mathfrak{X}(\bar{Q}), \ Z\in \Gamma(V\pi)$.
		\end{enumerate}
        Here, $\bar{X}^{h}, \bar{Y}^{h}$ are the horizontal lifts of $\bar{X}, \bar{Y}$, respectively, by the principal connection associated with the Chaplygin system.
    \end{theorem}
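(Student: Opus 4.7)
\textbf{Proof plan for Theorem \ref{extension:main:thm}.} The strategy is to exploit the decomposition $TQ = V\pi \oplus \D$ (which holds because $\D$ is the horizontal bundle of a principal connection) to \emph{define} $h$ by the three prescribed conditions, and then to argue that $\pi:(Q,h)\to (\bar Q, g_{can})$ is a Riemannian submersion whose horizontal distribution is exactly $\D$. Once this is in place, the conclusion is a chain of already-established facts: geodesics of $h$ starting in $\D$ stay in $\D$ and project to geodesics of $g_{can}$; geodesics of $g_{can}$ are reparametrizations of the reduced nonholonomic trajectories by Corollary \ref{reparametrization:corollary}; and the reduced nonholonomic trajectories lift horizontally to the nonholonomic trajectories on $Q$ by Proposition \ref{nh:geodesics:horizontal}.

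First I would verify that $h$ is a well-defined Riemannian metric. Given $v,w\in T_qQ$, write $v=\bar v^h + Z_v$ and $w = \bar w^h + Z_w$ with $\bar v^h, \bar w^h \in \D$ and $Z_v, Z_w \in V_q\pi$; then set
\begin{equation*}
h(v,w) = g_{can}(T\pi(v),T\pi(w)) + g(Z_v,Z_w).
\end{equation*}
This is symmetric, bilinear, and positive definite (as a direct sum of two positive definite forms on complementary subspaces), and it evidently satisfies properties (1)--(3). By construction, $\D$ is $h$-orthogonal to $V\pi$, so the horizontal distribution of the submersion $\pi:(Q,h)\to (\bar Q,g_{can})$ (i.e. the $h$-orthogonal complement of $\ker T\pi$) coincides with $\D$. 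Moreover, property (1) says exactly that $T\pi$ restricted to $\D$ is an isometry onto $(T\bar Q, g_{can})$, so $\pi$ is a Riemannian submersion.

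Second, I would apply Proposition \ref{submersion:geodesics} to this Riemannian submersion: any $h$-geodesic $c:[0,T]\to Q$ with $\dot c(0)\in \D$ remains horizontal, i.e. $\dot c(t)\in \D_{c(t)}$ for all $t$, and $c$ is the horizontal lift (in the sense of the principal connection, since the horizontal distribution of the submersion is $\D$) of the $g_{can}$-geodesic $\bar c := \pi\circ c$ in $\bar Q$. By Corollary \ref{reparametrization:corollary}, $\bar c$ is (up to a reparametrization $\varphi$) a reduced nonholonomic trajectory $\bar\gamma$ in $\bar Q$. By Proposition \ref{nh:geodesics:horizontal}, the horizontal lift of $\bar\gamma$ through $c(0)$ is a nonholonomic trajectory $\gamma$ in $Q$ starting at $c(0)$. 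Finally, by Lemma \ref{horizontal:lift:reparametrization}, the horizontal lift of $\bar c = \bar\gamma\circ\varphi$ is a reparametrization of the horizontal lift of $\bar\gamma$; that is, $c$ is a reparametrization of $\gamma$, which is what the theorem claims.

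The only subtle point is the identification of the ``horizontal lift with respect to the Riemannian submersion'' with the ``horizontal lift with respect to the principal connection $\omega$''. Both notions refer to curves whose velocity lies in the distribution $\D$, so they coincide as lifting operations on curves; there is nothing to check beyond the identification of the two horizontal distributions, which was already ensured by condition (3). I therefore do not anticipate a major obstacle: the work is essentially assembling Corollary \ref{reparametrization:corollary}, Proposition \ref{submersion:geodesics}, Proposition \ref{nh:geodesics:horizontal}, and Lemma \ref{horizontal:lift:reparametrization} on top of the Riemannian submersion structure of $h$ engineered by conditions (1)--(3).
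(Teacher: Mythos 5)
Your proposal is correct and follows essentially the same route as the paper's own proof: establish that $\pi:(Q,h)\to(\bar Q,g_{can})$ is a Riemannian submersion with horizontal distribution $\D$, then assemble Proposition \ref{submersion:geodesics}, Corollary \ref{reparametrization:corollary}, Proposition \ref{nh:geodesics:horizontal} and Lemma \ref{horizontal:lift:reparametrization}, including the observation that the two notions of horizontal lift coincide. No gaps.
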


    \begin{proof}
        The proof has two parts. First, we will prove that the projection $\pi:(Q,h)\to (\bar{Q}, g_{can})$ is a Riemannian submersion. 
		
		The metric $h$ is indeed a Riemannian metric, it is clearly a symmetric $(0,2)$-type smooth tensor. It is positive definite since if $q\in Q$ and $Z \in T_{q} Q$, $Z_{q}\neq 0$, then
		$$h(Z_{q},Z_{q}) = h(P_{\D}(Z_{q}) + P_{V\pi}(Z_{q}), P_{\D}(Z_{q}) + P_{V\pi}(Z_{q}))$$
		where $P_{\D}:TQ \rightarrow \D$ and $P_{V\pi}:TQ \rightarrow V\pi$ are the projections associated with the decomposition $TQ = \D \oplus V\pi$. Hence,
		$$h(Z_{q},Z_{q}) = g_{can}(T\pi (P_{\D}(Z_{q})), T\pi ( P_{\D}(Z_{q}))) + g(P_{V\pi}(Z_{q}), P_{V\pi}(Z_{q}))>0,$$
		by positive definiteness of $g_{can}$ and $g$ (note that if $Z_{q}\neq 0$ then $T\pi (P_{\D}(Z_{q}))\neq 0$ or $P_{V\pi}(Z_{q})\neq 0$). In addition, with respect to the Riemannian metric $h$, $\D = V^{\bot}\pi$. Hence, $\pi:(Q,h) \rightarrow (\bar{Q}, g_{can})$ is a Riemannian submersion since
		$$h(X,Y) = g_{can} (T\pi(X), T\pi(Y)),$$ for $X,Y\in \D$.
  
        Therefore, by Proposition \ref{submersion:geodesics}, the tangent lift of all geodesics of $(Q,h)$ with initial velocity in $\D$ stay in $\D$ and such geodesics are the horizontal lift of geodesics of $(\bar{Q}, g_{can})$.

        Moreover, by Corollary \ref{reparametrization:corollary}, reparametrizations of geodesics with respect to the metric $g_{can}$ are trajectories of $X_{nh}$ and, using Proposition \ref{nh:geodesics:horizontal}, the horizontal lift of the latter with respect to the nonholonomic principal connection $\omega$ are the nonholonomic trajectories. Note that the horizontal lift with respect to $\omega$ coincides with the horizontal lift with respect to the Riemannian submersion $\pi:(Q,h)\rightarrow (\bar{Q}, g_{can})$. This fact, Proposition \ref{submersion:geodesics} and Lemma \ref{horizontal:lift:reparametrization} imply that the nonholonomic trajectories must be reparametrizations of geodesics of $h$ with initial velocity in $\D$.
    \end{proof}

    \begin{remark}
		The construction of a Riemannian metric $h$ in the conditions of the Theorem above is by no means unique. Any metric $h'$ rendering $\pi:(Q, h')\to (\bar{Q}, g_{can})$ a Riemannian submersion will have the same geodesics with initial velocity in $\D$. In particular, the restriction of $h'$ to vertical vector fields may be given by any bundle metric on the vertical bundle.
	\end{remark}

    \begin{definition}
        The Riemannian metric $h$ in Theorem \ref{extension:main:thm} is called the \textit{principal Riemannian metric} with respect to $(Q,g)$ and $(\bar{Q},g_{can})$.
    \end{definition}

    Denote the norm with respect to the Riemannian metric $h$ by $\|\cdot \|_{h}$ and by $d:Q\times Q \to 
    \R$ the Riemannian distance induced by $h$, that is,
    $$d(p,q) = \inf \{ \ell(c) \ | \ c\in \Omega(q,p) \},$$ where $\Omega(q,p)$ is the set of all piecewise smooth curves segments in $Q$ from $q$ to $p$ and, for each $c:[0,T]\to Q$ in $\Omega(q,p)$ the length $\ell(c)$ of the segment $c$  is given by 
    $$\ell(c) = \int_{0}^{T} \| \dot{c} \|_{h} \ dt.$$
    Then, the following corollary of Theorem \ref{extension:main:thm} asserts that the length of a nonholonomic trajectory between two nearby points $q$ and $p$ is just the distance from $q$ to $p$. In other words, nonholonomic trajectories between two nearby points minimize the Riemannian length. The proof of the corollary is an immediate consequence of Theorem \ref{extension:main:thm} together with the facts that geodesics locally minimize the Riemannian distance and that the length of a curve is invariant under time reparametrization.

    \begin{corollary}\label{Riemannian:distance:cor}
        Let $(Q,g, G, \D)$ be a kinetic Chaplygin system. Suppose that the system is $\phi$-simple and $h$ is the principal Riemannian metric.  If $c:[0,T]\to Q$ is a nonholonomic trajectory there exists $0<t_{0}<T$ such that for all $t<t_{0}$ the length of the curve $c:[0,t]\to Q$ is just the Riemannian distance associated with $h$ between $c(0)$ and $c(t)$.
    \end{corollary}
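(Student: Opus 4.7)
The plan is to combine Theorem \ref{extension:main:thm} with two standard Riemannian facts: geodesics locally minimize Riemannian distance, and arclength is invariant under (monotone) time reparametrization.

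First, by Theorem \ref{extension:main:thm}, the nonholonomic trajectory $c:[0,T]\to Q$ is obtained by reparametrizing a geodesic $\gamma$ of $h$ with $\dot{\gamma}(0)\in\D$. More precisely, there is a smooth strictly increasing function $\varphi$ with $\varphi(0)=0$ such that $c(t)=\gamma(\varphi(t))$. The strict monotonicity follows from the chain of constructions in the previous results: the rescaling factor between $X_{nh}$ and $X_{g_{can}}$ in Corollary \ref{reparametrization:corollary} is the strictly positive function $e^{-\phi\circ\pi_{\bar{Q}}}$, and lifting horizontally to $(Q,h)$ via Proposition \ref{nh:geodesics:horizontal} and Lemma \ref{horizontal:lift:reparametrization} preserves this positivity.

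Next, I would invoke the classical local minimization property of geodesics (through the exponential map of $h$ and the Gauss lemma applied in a normal ball around $c(0)$): there exists $s_0>0$ such that
\begin{equation*}
    \ell(\gamma|_{[0,s]})=d(\gamma(0),\gamma(s)) \quad\text{for every } s\in(0,s_0),
\end{equation*}
where $d$ is the distance induced by $h$. Choosing $t_0\in(0,T)$ small enough that $\varphi(t_0)\le s_0$, the reparametrization invariance of $\ell$ together with $c(0)=\gamma(0)$ and $c(t)=\gamma(\varphi(t))$ yields, for every $t<t_0$,
\begin{equation*}
    \ell(c|_{[0,t]}) = \ell(\gamma|_{[0,\varphi(t)]}) = d(\gamma(0),\gamma(\varphi(t))) = d(c(0),c(t)),
\end{equation*}
which is the claim.

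No genuine obstacle is expected; the proof is essentially a one-line consequence of the main theorem. The only subtle point worth flagging is the strict monotonicity of the reparametrization $\varphi$, which is what legitimizes both the existence of $t_0\in(0,T)$ with $\varphi(t_0)\le s_0$ and the use of reparametrization invariance of $\ell$ in the displayed chain above.
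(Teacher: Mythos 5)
Your proposal is correct and follows exactly the argument the paper gives: the corollary is deduced from Theorem \ref{extension:main:thm} together with the local length-minimizing property of geodesics and the invariance of arclength under time reparametrization. Your additional remark on the strict monotonicity of the reparametrization (coming from the positivity of $e^{-\phi\circ\pi_{\bar{Q}}}$) is a useful clarification of a point the paper leaves implicit, but it does not change the route.
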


\section{$\phi$-simple Chaplygin mechanical systems}



    In our final result, we will see that the inclusion of an invariant potential function in the picture does not substantially change the previous results. Consider a smooth  $G$-invariant potential function $V$ on $Q$. By symmetry, this function is characterized by the existence of a smooth potential function $\bar{V}$ on $\bar{Q}$ such that $V = \bar{V} \circ \pi.$

    The trajectories of the Lagrangian system associated with a Riemannian metric $g$ and a potential function $V$, i.e., the trajectories of the Lagrangian system whose Lagrangian function is given by
    $$L_{(g,V)}=L_{g} - V\circ \tau_{Q}$$
    are commonly designated by trajectories of the mechanical system associated with $g$ and $V$.
    
    We are interested in extending Theorem \ref{extension:main:thm} to the case where there exists a potential function and thus the trajectories of the nonholonomic system $(L_{(g,V)}, \D)$ will no longer be reparametrizations of geodesics but of trajectories of a mechanical system with respect to a different Riemannian metric. The proof makes use of the previous results and adapts them to the presence of the potential.

    \subsection{Riemannian submersions and mechanical Lagrangian systems with basic potential}

    First of all, one is interested in obtaining an analogous result to Proposition \ref{submersion:geodesics} for a mechanical system with Lagrangian function $L_{(g,V)}$ and for a Riemannian submersion $\pi:(Q,g)\to (\bar{Q},\bar{g})$. On one hand, it is well-known (see, e.g. \cite{AM78}) that a curve $c:I \rightarrow Q$ is a trajectory of the mechanical system determined by $L_{(g,V)}$ if and only if it satisfies
    $$\nabla^{g}_{\dot{c}} \dot{c} = - \text{grad}_{g} V(c(t)),$$
    where $\text{grad}_g V$ is the gradient vector field of $V$ with respect to the metric $g$, that is,
    $$g(\text{grad}_{g} V (q), X)=dV(q) (X), \text{ for } X\in T_{q}Q.$$

    So, using Lemma \ref{metric:submersion:lemma} and the $G$-invariance of $V$, it is not difficult to prove that
    $$\nabla^{\bar{g}}_{\dot{\bar{c}}} \dot{\bar{c}} = - \text{grad}_{\bar{g}}\bar{V}(\bar{c}(t)),$$
    where $\bar{c}=\pi \circ c$ and $c$ is a horizontal trajectory, that is, $\dot{c}(t)\in V_{c(t)}^{\bot}\pi$, $\forall t$ and $\bar{V}\in C^{\infty}(\bar{Q})$ is defined by $V=\bar{V}\circ \pi$. Therefore, horizontal trajectories of the mechanical system associated with $L_{(g,V)}$ project to trajectories of the mechanical system associated with $L_{(\bar{g}, \bar{V})}$.

    On the other hand, the vector field $\Gamma_{(g,V)}$ on $TQ$ generating the dynamics associated with $L_{(g,V)}$, is tangent to the horizontal distribution $V^{\bot}\pi$ implying that if $c$ is a mechanical trajectory which is horizontal at one point then it is horizontal at all points. This can be seen by noting that this vector field can be written as
    $$\Gamma_{(g,V)} = \Gamma_{g} - (\text{grad}_{g} V)^{\textbf{v}}$$
    where $\Gamma_{g}$ is the geodesic vector field of $g$ and
    $$(\text{grad}_{g} V)^{\textbf{v}} (v_{q})=\left.\frac{d}{dt}\right|_{t=0}(v_{q} + t \text{grad }V (q)), \text{ for } v_{q}\in T_{q} Q.$$


    

    Now, it is easy to see that $\text{grad}_{g} V$ is a horizontal vector field. In fact, if $X \in V_q \pi = \ker (T_{q}\pi)$ then, using that $V= \bar{V} \circ\pi$, we deduce that 
    $$g(\text{grad }V (q), X) = X(V) = X(\bar{V} \circ\pi)= 0 .$$
    Thus, $(\text{grad}_{g} V)^{\textbf{v}}|_{\D} \in \mathfrak{X}(\D)$. On the other hand, by Proposition \ref{submersion:geodesics}, we have that $\Gamma_{g}|_{\D}\in\mathfrak{X}(\D)$. Therefore, $\Gamma_{(g,V)}|_{\D} \in \mathfrak{X}(\D)$.

    In summary, we have proved the following result.

    \begin{proposition}\label{mechanical:submersion:prop}
        Let $L_{(g,V)}:TQ\to \R$ be a mechanical Lagrangian function and $\pi:(Q,g)\to (\bar{Q}, \bar{g})$ a Riemannian submersion such that $V\in C^{\infty}(Q)$ is a $\pi$-basic function, that is, there exists $\bar{V}\in C^{\infty}(\bar{Q})$ and $V=\bar{V}\circ \pi$. Denote by $V\pi=\ker T\pi$ and $\D=V^{\bot}\pi$ the vertical and horizontal distributions, respectively, associated with the Riemannian submersion $\pi$.
        \begin{enumerate}
            \item If $c: I\to Q$ is a horizontal trajectory for the mechanical system determined by $g$ and $V$ then $\bar{c}= \pi \circ c$ is a trajectory for the mechanical system determined by $\bar{g}$ and $\bar{V}$.
            \item If $c: I\to Q$ is a trajectory for $L_{(g,V)}$ which is horizontal at one point, then it is always horizontal. In fact, the horizontal lift of a trajectory for $L_{(\bar{g},\bar{V})}$ is a trajectory for $L_{(g,V)}$.
        \end{enumerate}
    \end{proposition}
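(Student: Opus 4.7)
The plan is to reduce both claims to the geometric form of the Euler-Lagrange equations, namely that a curve $c: I \to Q$ is a trajectory of the mechanical system $L_{(g,V)}$ if and only if $\nabla^{g}_{\dot c}\dot c = -\text{grad}_g V\circ c$, and then exploit the Riemannian submersion structure through Lemma \ref{metric:submersion:lemma}. The decisive observation is that the $\pi$-basic hypothesis $V=\bar V\circ\pi$ forces $\text{grad}_g V$ to be a horizontal vector field: for any $X\in V\pi$ we have $g(\text{grad}_g V, X)=dV(X)=d\bar V(T\pi(X))=0$, since vertical vectors lie in $\ker T\pi$.

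For part (1), I would take $c$ horizontal, set $\bar c = \pi\circ c$, and apply $T\pi$ to the Euler-Lagrange equation. Writing $\dot c = \bar X^{h}\circ c$ for a locally defined $\bar X\in\mathfrak{X}(\bar Q)$, Lemma \ref{metric:submersion:lemma}(3) yields $T\pi(\nabla^{g}_{\dot c}\dot c) = \nabla^{\bar g}_{\dot{\bar c}}\dot{\bar c}$. It remains to verify that $T\pi(\text{grad}_g V) = \text{grad}_{\bar g}\bar V$ along $c$; because $\text{grad}_g V$ is horizontal, its image under $T\pi$ is characterized by the identities $\bar g(T\pi(\text{grad}_g V), \bar Y) \circ \pi = g(\text{grad}_g V, \bar Y^{h}) = \bar Y^{h}(V) = \bar Y(\bar V)\circ\pi$ for every $\bar Y\in\mathfrak{X}(\bar Q)$, which is exactly the defining property of $\text{grad}_{\bar g}\bar V$.

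For part (2), the natural approach is to show directly that the mechanical second-order vector field $\Gamma_{(g,V)} = \Gamma_{g} - (\text{grad}_g V)^{\mathbf v}$ is tangent to $\D\subseteq TQ$, so that $\D$ is invariant under its flow. Tangency of $\Gamma_g|_{\D}$ is Proposition \ref{submersion:geodesics}; for the remaining term, since the vertical lift $(\text{grad}_g V)^{\mathbf v}$ at a point $v_q\in\D_q$ is the tangent vector in $T_{v_q}(TQ)$ to the fiberwise line $t\mapsto v_q + t\,\text{grad}_g V(q)$, and this line stays in $\D$ because $\D_q$ is a linear subspace containing $\text{grad}_g V(q)$, we conclude $(\text{grad}_g V)^{\mathbf v}|_{\D}\in\mathfrak{X}(\D)$. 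Thus any integral curve of $\Gamma_{(g,V)}$ with horizontal initial velocity stays horizontal. The converse statement then follows from uniqueness: given a trajectory $\bar c$ of $L_{(\bar g,\bar V)}$ and a point $q_0\in\pi^{-1}(\bar c(0))$, let $c$ be the unique integral curve of $\Gamma_{(g,V)}|_{\D}$ with $\dot c(0)$ equal to the horizontal lift of $\dot{\bar c}(0)$ at $q_0$; by the already established forward direction $\pi\circ c$ is a trajectory for $L_{(\bar g, \bar V)}$ through $\bar c(0)$ with the same initial velocity, so $\pi\circ c = \bar c$, and horizontality of $\dot c$ identifies $c$ with the horizontal lift of $\bar c$.

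The only nontrivial step is the computation identifying $T\pi(\text{grad}_g V)$ with $\text{grad}_{\bar g}\bar V$; everything else is a structural consequence of the submersion geometry and the invariance of $\D$. No part of the argument requires the $\phi$-simple or principal bundle hypotheses of the preceding sections, so the proposition stands as a clean general fact about Riemannian submersions with $\pi$-basic potentials, ready to be combined with Theorem \ref{extension:main:thm} in the subsequent proof of Theorem \ref{potential:thm}.
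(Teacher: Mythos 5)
Your proposal is correct and follows essentially the same route as the paper: part (1) by projecting the equation $\nabla^{g}_{\dot c}\dot c=-\mathrm{grad}_{g}V$ through Lemma \ref{metric:submersion:lemma} together with the identification $T\pi(\mathrm{grad}_{g}V)=\mathrm{grad}_{\bar g}\bar V$, and part (2) by splitting $\Gamma_{(g,V)}=\Gamma_{g}-(\mathrm{grad}_{g}V)^{\mathbf v}$ and checking that each summand is tangent to $\D$, using Proposition \ref{submersion:geodesics} and the horizontality of $\mathrm{grad}_{g}V$ forced by $V=\bar V\circ\pi$. Your write-up actually supplies details the paper leaves as ``not difficult to prove,'' notably the gradient-projection identity and the uniqueness argument for the horizontal-lift statement.
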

    

    \subsection{The main result of the section}

    Before presenting the main result of this section, there are a few considerations about $\phi$-simple mechanical systems that we must recall.

   The notion of $\phi$-simplicity is independent of the potential function. So, if $(Q,g,G,\D)$ is a $\phi$-simple kinetic nonholonomic Chaplygin system, then the mechanical system continues to be $\phi$-simple under the addition of a $G$-invariant potential (see \cite{GaMa}).

    Following \cite{GaMa}, the framework described at the beginning of Section 3, still holds under the addition of a $G$-invariant potential function.

    Let $(Q,g,V, G, \D)$ be a $\phi$-simple Chaplygin mechanical system. Consider the Hamiltonian function $H_{(g, V)}:T^{*}Q\to \R$ given by
    $$H_{(g, V)}(\alpha_{q}) = \frac{1}{2}g(\sharp_{g}(\alpha_{q}), \sharp_{g}(\alpha_{q})) + V(q), \text{ for } \alpha_{q} \in T_{q}^{*}Q.$$
    Next, we define the reduced Hamiltonian function $H_{(\bar{g}, \bar{V})}:T^{*}\bar{Q}\to \R$ as follows
    $$H_{(\bar{g}, \bar{V})}(\alpha_{\bar{q}}) = \frac{1}{2}\bar{g}(\sharp_{\bar{g}}(\alpha_{\bar{q}}), \sharp_{\bar{g}}(\alpha_{\bar{q}})) + \bar{V}(\bar{q}), \text{ for } \alpha_{\bar{q}} \in T_{\bar{q}}^{*}\bar{Q},$$
    where $\bar{g}$ is the reduced Riemannian metric and $\bar{V}\in C^{\infty}(\bar{Q})$ is such that $V=\bar{V}\circ \pi$.
        
   Now, we can consider the symplectic vector bundle isomorphism $\psi:(T^{*}\bar{Q}, \omega_{\bar{Q}})\to (T^{*}\bar{Q}, \Omega)$ in Proposition \ref{vector:bundle:isomorphism} and, consequently, we can define the Riemannian metric $g_{can}$ on $\bar{Q}$ as in this proposition. Therefore we can introduce a new Hamiltonian function $H_{(g_{can}, \bar{V})}:T^{*}\bar{Q}\to \R$ given by
    $$H_{(g_{can}, \bar{V})}(\alpha_{\bar{q}}) = \frac{1}{2}g_{can}(\sharp_{g_{can}}(\alpha_{\bar{q}}), \sharp_{g_{can}}(\alpha_{\bar{q}})) + \bar{V}(\bar{q}),$$
    which is the mechanical energy associated with the Riemannian metric determined by $g_{can}$ and the potential energy $\bar{V}\in C^{\infty}(\bar{Q})$.
    
    Analogously to the case without potential, this Hamiltonian function satisfies $$H_{(g_{can}, \bar{V})}=H_{(\bar{g}, \bar{V})} \circ \psi^{-1}.$$

    Since the system is $\phi$-simple, the reduced nonholonomic vector field $X_{nh}$ is still Hamiltonizable with the addition of the $G$-invariant potential function (see Theorems 3.11 and 3.21 in \cite{GaMa}). Therefore, the proof of Corollary \ref{reparametrization:corollary} holds by replacing the Hamiltonian function $H_{\bar{g}}$ with $H_{(\bar{g}, \bar{V})}$.


    So, we deduce the following result.
    \begin{corollary}\label{reparametrization:mechanical:system}
        If $X_{(g_{can}, \bar{V})}\in \mathfrak{X}(T^{*}\bar{Q})$ is the dynamical vector field associated with the mechanical Hamiltonian function $H_{(g_{can}, \bar{V})}$, then the vector field $e^{-(\phi\circ\pi_{\bar{Q}})}X_{nh}$ and $X_{(g_{can}, \bar{V})}$ are $\psi$-related. So, the trajectories of the reduced nonholonomic vector field $X_{nh}$ are reparametrizations of trajectories of the Hamiltonian system with mechanical Hamiltonian function $H_{(g_{can}, \bar{V})}$.
    \end{corollary}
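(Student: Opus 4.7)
The plan is to mirror the proof of Corollary \ref{reparametrization:corollary} step by step, merely substituting the kinetic Hamiltonian $H_{\bar{g}}$ by the mechanical Hamiltonian $H_{(\bar{g},\bar{V})}$, and verifying that each ingredient of that earlier argument still survives the presence of the $G$-invariant potential $V$. The whole proof is essentially an adaptation, so I would simply indicate each substitution rather than redo the algebra.

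First I would invoke the Hamiltonization of the reduced nonholonomic system with potential. Since $\phi$-simplicity is a property of the constraint distribution and the kinetic metric, independent of $V$, Theorems 3.11 and 3.21 of \cite{GaMa} (applied to the mechanical case, as already quoted above) yield
\[
i_{e^{-(\phi\circ\pi_{\bar{Q}})}\,X_{nh}}\,\Omega \;=\; dH_{(\bar{g},\bar{V})},
\]
with $\Omega = e^{(\phi\circ\pi_{\bar{Q}})}\Omega_{nh}$ the same symplectic form as in the kinetic case and $X_{nh}$ now denoting the reduced nonholonomic vector field of the mechanical system $(L_{(g,V)},\D)$.

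Next I would transport this identity through the symplectic isomorphism $\psi$ of Proposition \ref{vector:bundle:isomorphism}. Using $\psi^{*}\omega_{\bar{Q}} = \Omega$ and pushing the equation forward by $\psi$ gives
\[
i_{\,e^{-(\phi\circ\pi_{\bar{Q}})}\,T\psi\,\circ\, X_{nh}\,\circ\, \psi^{-1}}\,\omega_{\bar{Q}} \;=\; d\bigl(H_{(\bar{g},\bar{V})}\circ \psi^{-1}\bigr),
\]
where the conformal factor $e^{-(\phi\circ\pi_{\bar{Q}})}$ transfers unchanged because $\pi_{\bar{Q}}\circ \psi = \pi_{\bar{Q}}$. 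Applying the identity $H_{(g_{can},\bar{V})} = H_{(\bar{g},\bar{V})}\circ \psi^{-1}$, recorded just before the statement (and itself an immediate consequence of $\psi$ covering the identity on $\bar{Q}$, so $\bar{V}\circ \pi_{\bar{Q}}\circ \psi = \bar{V}\circ \pi_{\bar{Q}}$), the right-hand side becomes $dH_{(g_{can},\bar{V})}$. Nondegeneracy of $\omega_{\bar{Q}}$ then forces
\[
e^{-(\phi\circ\pi_{\bar{Q}})}\,T\psi\circ X_{nh}\circ \psi^{-1} \;=\; X_{(g_{can},\bar{V})},
\]
which is precisely the $\psi$-relatedness assertion.

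The trajectory statement then follows exactly as in Corollary \ref{reparametrization:corollary}: an integral curve $\bar{\gamma}$ of $e^{-(\phi\circ\pi_{\bar{Q}})}X_{nh}$ is mapped by $\psi$ to an integral curve of $X_{(g_{can},\bar{V})}$, the two project to the same curve on $\bar{Q}$ via $\pi_{\bar{Q}}\circ\psi = \pi_{\bar{Q}}$, and an integral curve of $e^{-(\phi\circ\pi_{\bar{Q}})}X_{nh}$ differs from one of $X_{nh}$ only by a time reparametrization. I do not foresee a genuine obstacle here; the only delicate point to be careful about is that the Hamiltonization identity of \cite{GaMa} is cited in its mechanical (rather than purely kinetic) form and that $\bar{V}$ is automatically basic on $T^{*}\bar{Q}$, so no further compatibility between $\psi$ and the potential has to be checked.
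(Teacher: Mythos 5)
Your proposal is correct and follows exactly the paper's route: the paper itself proves this corollary by observing that the Hamiltonization identity of \cite{GaMa} persists in the presence of the $G$-invariant potential and then rerunning the proof of Corollary \ref{reparametrization:corollary} with $H_{\bar{g}}$ replaced by $H_{(\bar{g},\bar{V})}$, using $\psi^{*}\omega_{\bar{Q}}=\Omega$ and $H_{(g_{can},\bar{V})}=H_{(\bar{g},\bar{V})}\circ\psi^{-1}$. You have simply written out explicitly the substitution that the paper leaves implicit.
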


    Now, using Proposition \ref{mechanical:submersion:prop}, Corollary \ref{reparametrization:mechanical:system} and proceeding as in the proof of Theorem \ref{extension:main:thm}, we can prove a version of this last theorem for the more general case of a $\phi$-simple Chaplygin mechanical system.

    \begin{theorem}\label{potential:thm}
        Let $(Q,g, V, G, \D)$ be a Chaplygin system with $G$-invariant potential $V$. If the system is $\phi$-simple, there exists a Riemannian metric $h$ on $Q$ such that all the mechanical trajectories associated with $h$ and $V$ with initial velocity in $\D$ are reparametrizations of the nonholonomic trajectories associated with $(L_{(g,V)}, \D)$. In fact, $h$ may be chosen as follows
        \begin{enumerate}
			\item $h(\bar{X}^{h}, \bar{Y}^{h}) = g_{can}(\bar{X}, \bar{Y}), \forall \bar{X}, \bar{Y}\in \mathfrak{X}(\bar{Q})$;
			\item $h(Z, W) = g(Z, W), \forall Z,W\in \Gamma(V\pi)$;
			\item $h(\bar{X}^{h}, Z)= 0, \forall \bar{X}\in \mathfrak{X}(\bar{Q}), \ Z\in \Gamma(V\pi)$.
		\end{enumerate}
        Here, $\bar{X}^{h}, \bar{Y}^{h}$ are the horizontal lifts of $\bar{X}, \bar{Y}$, respectively, by the principal connection associated with the Chaplygin system.
    \end{theorem}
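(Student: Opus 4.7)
The plan is to follow, step by step, the three-part strategy used in the proof of Theorem \ref{extension:main:thm}, upgrading each geodesic statement to its mechanical counterpart. First, I would define $h$ by the three listed conditions and verify that $\pi:(Q,h)\to(\bar{Q}, g_{can})$ is a Riemannian submersion with vertical distribution $V\pi$ and horizontal distribution $\D$. This verification is independent of the potential and reproduces verbatim the first part of the proof of Theorem \ref{extension:main:thm}: the three conditions determine $h$ uniquely on each tangent space, positivity follows from the positivity of $g$ on $V\pi$ and of $g_{can}$ on $\D$, and the Riemannian-submersion identity is exactly condition (1).

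With the submersion structure in place, the second step is to invoke Proposition \ref{mechanical:submersion:prop}. Its hypothesis that $V$ be $\pi$-basic is guaranteed by the $G$-invariance of $V$, so we obtain: (i) the dynamical vector field of $L_{(h,V)}$ on $TQ$ is tangent to $\D$; and (ii) every trajectory of $L_{(h,V)}$ with initial velocity in $\D$ is the horizontal lift, with respect to the Riemannian submersion, of a trajectory of the mechanical system $L_{(g_{can}, \bar{V})}$ on $\bar{Q}$, and conversely. Crucially, the horizontal distribution of the Riemannian submersion $\pi:(Q,h)\to (\bar{Q}, g_{can})$ coincides with $\D$, which is also the horizontal distribution of the principal connection $\omega$ associated with the Chaplygin system; hence the two notions of horizontal lift used below will agree.

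The third step is to connect the mechanical trajectories of $(g_{can}, \bar{V})$ on $\bar{Q}$ to the nonholonomic dynamics on $Q$. By Corollary \ref{reparametrization:mechanical:system}, the trajectories of the reduced nonholonomic vector field $X_{nh}$ are reparametrizations of trajectories of the Hamiltonian system with Hamiltonian $H_{(g_{can}, \bar{V})}$, i.e.\ of mechanical trajectories of $(g_{can}, \bar{V})$. Next I would apply the mechanical analogue of Proposition \ref{nh:geodesics:horizontal}: the nonholonomic trajectories of $(L_{(g,V)}, \D)$ on $Q$ are precisely the $\omega$-horizontal lifts of trajectories of the reduced system on $\bar{Q}$. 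Combined with Lemma \ref{horizontal:lift:reparametrization}, which lets horizontal lifting commute with time reparametrization, we obtain the chain: any nonholonomic trajectory on $Q$ is the horizontal lift of a reparametrization of a trajectory of $L_{(g_{can}, \bar{V})}$, which is a reparametrization of the horizontal lift of that trajectory, which by step two is a reparametrization of a mechanical trajectory of $L_{(h, V)}$ whose initial velocity lies in $\D$. Reading the chain backwards delivers the statement of the theorem.

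The main technical obstacle I foresee is justifying the mechanical analogue of Proposition \ref{nh:geodesics:horizontal}, namely that the unreduced nonholonomic trajectories on $Q$ are $\omega$-horizontal lifts of the trajectories of the Chaplygin-reduced mechanical system on $\bar{Q}$. The cited proposition is stated only in the kinetic case, and so one needs to check that the argument still goes through when the d'Alembert equations include the gradient term $-\operatorname{grad}_{g}V$. This is reduced to showing that $\operatorname{grad}_{g}V$ is horizontal (which follows from $V=\bar{V}\circ\pi$ and the definition of $V\pi$) and $G$-invariant, so that the correction term behaves equivariantly under the reduction; once this is in hand, the $G$-invariance of the remaining structures and the geodesic-invariance of $\D$ under $\nabla^{nh}$ yield the claim, and the rest of the proof is a verbatim adaptation of the kinetic case.
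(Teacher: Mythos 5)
Your proposal is correct and follows essentially the same route as the paper, which proves the theorem by combining Proposition \ref{mechanical:submersion:prop}, Corollary \ref{reparametrization:mechanical:system}, and the argument of Theorem \ref{extension:main:thm} verbatim. You even make explicit the one step the paper leaves implicit --- the mechanical analogue of Proposition \ref{nh:geodesics:horizontal}, justified via the horizontality and $G$-invariance of $\operatorname{grad}_{g}V$ --- which is exactly the right way to close that gap.
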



        

    \begin{remark}\label{weaker:condition:remark}
        Note that a more general version of Theorem \ref{potential:thm} may be proved if we replace, in the statement of the theorem, the hypothesis that the Chaplygin system is $\phi$-simple with the following weaker condition: there exists a smooth function $\phi\in C^{\infty}(\bar{Q})$ such that the trajectories of the reduced nonholonomic vector field $X_{nh}$ are reparametrizations of the trajectories of the mechanical system with Lagrangian function $L_{(g_{can}, \bar{V})}$, where $g_{can}= e^{2\phi} \bar{g}$. In particular, this condition holds if $X_{nh}$ is just the dynamics in the Hamiltonian side associated with the mechanical system whose Lagrangian function is $L_{(\bar{g}, \bar{V})}$.
    \end{remark}

    \section{Examples}

    \begin{example}\label{example1}[Vertical rolling disk]
		The vertical rolling disk or the rolling penny is one of the most relevant examples of a nonholonomic kinetic system. In the configuration manifold $Q=\R^{2}\times \Es^{1} \times \Es^{1}$, we introduce the local coordinate system $(x,y,\theta,\varphi)$ and the kinetic Lagrangian function $L_{g}:TQ\rightarrow \R$ given by
		\begin{equation*}
			L_{g}(x,y,\varphi,\theta,\dot{x},\dot{y},\dot{\varphi},\dot{\theta})=\frac{1}{2}m(\dot{x}^2+\dot{y}^2)+\frac{1}{2}I\dot{\theta}^2+\frac{1}{2}J\dot{\varphi}^2,
		\end{equation*}
		where
		$$g = m dx^{2} + m dy^{2} + I d\theta^{2} + J d\varphi^{2}$$
		and subjected to the distribution $\D\subseteq TQ$ determined by the local expression
		\begin{equation*}
			\dot{x}=R \dot{\theta} \cos \varphi, \quad \dot{y}= R \dot{\theta} \sin \varphi,
		\end{equation*}
		where $R$ is the radius of the disk, $m$ is its mass of the disk and $I$, $J$ are the inertia.
		
		It is well-known that under the translation
		\begin{equation*}
			\begin{split}
				\Phi: \R^{2} \times Q &  \longrightarrow Q \\
				((a,b), (x,y,\theta,\varphi)) & \mapsto (x+a, y+b, \theta, \varphi)
			\end{split}
		\end{equation*}
		the kinetic nonholonomic system $(L_{g}, \D)$ is a Chaplygin system with principal bundle projection $\pi: Q \rightarrow \bar{Q}:=\Es^{1} \times \Es^{1}$ and the gyroscopic tensor $\mathcal{T}$ vanishes identically (see \cite{GaMa}). So, the reduced equations are geodesics with respect to the reduced metric $\bar{g}$:
		\begin{equation*}
			\bar{g} = (I+mR^{2}) d\theta^{2} + J d\varphi^{2}
		\end{equation*}		
		on $\bar{Q}$. Therefore, we are in the conditions described in Remark \ref{weaker:condition:remark} and we may define the principal Riemannian metric $h$ on $Q$, i.e., the one such that $h|_{\D \times \D}=g|_{\D \times \D},$, $h|_{V\pi \times V\pi}=g|_{V\pi \times V\pi}$ and $h|_{\D\times V\pi}=0$. Let $\{e_{1}, e_{2}, e_{3}, e_{4}\}$ be the basis of vector fields on $Q$ defined by
		$$e_{1} = \frac{\partial}{\partial \theta} + R \cos \varphi\frac{\partial}{\partial x} + R \sin \varphi \frac{\partial}{\partial y}, \ e_{2}= \frac{\partial}{\partial \varphi}, \ e_{3} = \frac{\partial}{\partial x}, \ e_{4} = \frac{\partial}{\partial y}$$
		so that $\D = \langle \{e_{1}, e_{2}\} \rangle$ and $V\pi = \langle \{e_{3}, e_{4}\} \rangle$.
		Therefore, in $V\pi\times V\pi$ we have
        \begin{equation}\label{aux:h:V}
            h(\frac{\partial}{\partial x}, \frac{\partial}{\partial x}) = m, \ h(\frac{\partial}{\partial y},\frac{\partial}{\partial y}) = m, \  h(\frac{\partial}{\partial x}, \frac{\partial}{\partial y}) = 0.
        \end{equation}
		In $\D\times \D$ we must have
		$$h(e_{1},e_{1})=g(e_{1},e_{1})=I+mR^{2}, \ h(e_{2}, e_{2}) = g(e_{2},e_{2})=J, \ h(e_{1}, e_{2}) = g(e_{1}, e_{2}) = 0.$$
		So, using \eqref{aux:h:V} we may write the following three equations
		\begin{equation*}
			\begin{split}
				& h(\frac{\partial}{\partial \theta}, \frac{\partial}{\partial \theta}) + 2 R \cos \varphi \ h(\frac{\partial}{\partial x}, \frac{\partial}{\partial \theta}) + 2 R \sin \varphi \ h(\frac{\partial}{\partial y}, \frac{\partial}{\partial \theta}) = I \\
				& h(\frac{\partial}{\partial \varphi}, \frac{\partial}{\partial \varphi}) = J \\
				& h(\frac{\partial}{\partial \theta}, \frac{\partial}{\partial \varphi}) + R \cos \varphi \  h(\frac{\partial}{\partial x}, \frac{\partial}{\partial \varphi}) + R \sin \varphi \ h(\frac{\partial}{\partial y}, \frac{\partial}{\partial \varphi}) = 0.
			\end{split}
		\end{equation*}
		Finally, from the condition that $\D$ and $V\pi$ must be orthogonal under $h$, we deduce
		\begin{equation*}
			\begin{split}
				& h(\frac{\partial}{\partial \theta}, \frac{\partial}{\partial x}) = -mR\cos \varphi, \quad h(\frac{\partial}{\partial \theta}, \frac{\partial}{\partial y}) = - mR\sin \varphi \\
				 & h(\frac{\partial}{\partial \varphi}, \frac{\partial}{\partial x}) = 0, \quad h(\frac{\partial}{\partial \varphi}, \frac{\partial}{\partial y}) = 0.
			\end{split}
		\end{equation*}
		Inserting the last four equations into the previous three, we deduce
		$$h(\frac{\partial}{\partial \theta}, \frac{\partial}{\partial \theta}) = I + 2mR^{2}, \quad h(\frac{\partial}{\partial \theta}, \frac{\partial}{\partial \varphi}) = 0.$$
		Therefore, the modified metric $h$ is
		\begin{equation*}
				h = m dx^{2} + m dy^{2} + (I + 2 mR^{2}) d\theta^{2} + J d\varphi^{2} - mR \cos \varphi \ dx d\theta - mR \sin\varphi \ dy d\theta.
		\end{equation*}
		
		The geodesic equations for the Riemannian metric $h$ are
        \begin{equation*}
            \begin{split}
            \ddot{x} & = -\frac{mR^{2}}{2(I + mR^{2})}\sin( 2\varphi) \dot{x}\dot{\varphi} + \frac{mR^{2}}{I + mR^{2}}\cos^{2}(\varphi) \dot{y}\dot{\varphi} - R \sin(\varphi)\dot{\theta}\dot{\varphi} \\
            \ddot{y} & = -\frac{mR^{2}}{I + mR^{2}}\sin^{2}(\varphi) \dot{x}\dot{\varphi} + \frac{mR^{2}}{2(I + mR^{2})}\sin(2\varphi) \dot{y}\dot{\varphi} + R \cos(\varphi)\dot{\theta}\dot{\varphi} \\
            \ddot{\theta} & = -\frac{mR}{I + mR^{2}}\sin(\varphi)\dot{x}\dot{\varphi} + \frac{m R}{I + mR^{2}}\cos(\varphi) \dot{y}\dot{\varphi} \\
            \ddot{\varphi} & = \frac{mR}{J}\sin(\varphi)\dot{x}\dot{\theta}-\frac{mR}{J}\cos(\varphi)\dot{y}\dot{\theta}
        \end{split}
        \end{equation*}
        and their solutions with initial velocity in $\D$ satisfy
        \begin{equation*}
            \begin{split}
            \ddot{x} & = - R\sin(\varphi)\dot{\theta}\dot{\varphi} \\
            \ddot{y} & = R\cos(\varphi)\dot{\theta}\dot{\varphi} \\
            \ddot{\theta} & = 0 \\
            \ddot{\varphi} & = 0.
        \end{split}
        \end{equation*}
        On the other hand, the equations of the nonholonomic trajectories can be found using equations \eqref{LdA}, from where we obtain
        \begin{equation*}
	\begin{split}
	m\ddot{x} & = \lambda_{1}  \\
	m\ddot{y} & = \lambda_{2} \\
	I\ddot{\theta} & =-\lambda_{1} R \cos \varphi-\lambda_{2} R \sin \varphi \\
	J \ddot{\varphi} & =0 \\
	\end{split}
	\end{equation*}
        which after determination of the Lagrange multipliers give precisely the previous set of equations.
	\end{example}

    \begin{example}\label{example2}(The nonholonomic particle)
        Consider a nonholonomic system with configuration space $Q=\R^{3}$, Lagrangian function given by $$L_{g} = \frac{1}{2}(\dot{x}^{2}+ \dot{y}^{2} + \dot{z}^{2})$$
        and subjected to the constraint $\dot{z} = y \dot{x}$.
        This system, known as the nonholonomic particle, is a Chaplygin system together with the group action of $\mathbb{R}$ acting by translations on $z$. The reduced configuration space is $\bar{Q}=\R^{2}$ with principal bundle projection $\pi:Q\rightarrow \bar{Q}$ given by $\pi(x,y,z)=(x,y)$.

        Moreover, from \cite{GaMa}, it is also a $\phi$-simple system with
        $$\phi(x,y)=-\frac{1}{2}\ln(1+y^{2}).$$
        This system has the corresponding reduced Riemannian metric $\bar{g}$ 
        on $\bar{Q}$ given by
        $$\bar{g}=(1 + y^{2}) dx^{2} + dy^{2}.$$
        Therefore, the conformal change $g_{can}=e^{2\phi}\bar{g}$ gives the Riemannian metric
        $$g_{can}=dx^{2} + \frac{dy^{2}}{1 + y^{2}}.$$

        The principal Riemannian metric on $Q$ figuring in Theorem \ref{extension:main:thm} is
        $$h = (1+y^{2}) dx^{2} + \frac{dy^{2}}{1+y^{2}} + dz^{2} - y dx dz.$$

        The geodesic equations with respect to $h$ are
        \begin{equation*}
            \begin{split}
                \ddot{x} & = -y \dot{x}\dot{y} + \dot{y}\dot{z} \\
                \ddot{y} & = \frac{(y^{4} + 2 y^{2} + 1 )y\dot{x}^2 - (y^{4} + 2y^{2} + 1)\dot{x}\dot{z} + y \dot{y}^2}{1 + y^2} \\
                \ddot{z} & = -y^2 \dot{x}\dot{y} + \dot{x}\dot{y} + y \dot{y}\dot{z}
            \end{split}
        \end{equation*}
        Their solutions with initial velocity in $\D$ satisfy
        \begin{equation}\label{h:equations}
            \begin{split}
                \ddot{x} & = 0 \\
                \ddot{y} & = \frac{y \dot{y}^2}{1 + y^2} \\
                \ddot{z} & =  \dot{x}\dot{y}
            \end{split}
        \end{equation}
        On the other hand, using \eqref{LdA}, we deduce that the nonholonomic trajectories are solutions of the equations
        \begin{equation}\label{nh:equations}
            \begin{split}
                \ddot{x} & = -y \frac{\dot{x}\dot{y}}{1+y^{2}} \\
                \ddot{y} & = 0 \\
                \ddot{z} & = \frac{\dot{x}\dot{y}}{1+y^{2}} 
            \end{split}
        \end{equation}
        with Lagrange multiplier already determined.
        
        Indeed, if we rewrite \eqref{nh:equations} as a first order system of differential equations, we obtain the equations
        \begin{equation*}
            \begin{split}
                \dot{x} & = v_{x} \quad \quad \dot{v_x} = -y \frac{v_{x} v_{y}}{1+y^{2}} \\
                \dot{y} & = v_{y} \quad \quad \dot{v_y} = 0 \\
                \dot{z} & = v_{z} \quad \quad \dot{v_z} = \frac{v_{x} v_{y}}{1+y^{2}}.
            \end{split}
        \end{equation*}
        whose solutions are trajectories of the vector field $\Gamma_{(g,\D)}$. Consequently, the trajectories of the vector field $\sqrt{1+y^{2}}\Gamma_{(g,\D)}$ satisfy
        \begin{equation*}
            \begin{split}
                \dot{x} & = \sqrt{1+y^{2}}v_{x} \quad \quad \dot{v_x} = -y \frac{v_{x} v_{y}}{\sqrt{1+y^{2}}} \\
                \dot{y} & = \sqrt{1+y^{2}}v_{y} \quad \quad \dot{v_y} = 0 \\
                \dot{z} & = \sqrt{1+y^{2}}v_{z} \quad \quad \dot{v_z} = \frac{v_{x} v_{y}}{\sqrt{1+y^{2}}}.
            \end{split}
        \end{equation*}
        Considering the change of variables $\bar{v}_{x}=\sqrt{1+y^{2}}v_{x}$, $\bar{v}_{y}=\sqrt{1+y^{2}}v_{y}$ and $\bar{v}_{z}=\sqrt{1+y^{2}}v_{z}$, we may rewrite the previous system in terms of the new variables as
        \begin{equation*}
            \begin{split}
                \dot{x} & = \bar{v}_{x} \quad \quad \dot{v_x} = -\frac{y}{\sqrt{1+y^{2}}} \frac{\bar{v}_{x}\bar{v}_{y}}{1+y^{2}} \\
                \dot{y} & = \bar{v}_{y}\quad \quad \dot{v_y} = 0 \\
                \dot{z} & = \bar{v}_{z} \quad \quad \dot{v_z} = \frac{1}{\sqrt{1+y^{2}}}\frac{\bar{v}_{x}\bar{v}_{y}}{1+y^{2}}.
            \end{split}
        \end{equation*}
        and after differentiating $\bar{v}_{x}$, $\bar{v}_{y}$ and $\bar{v}_{z}$ and restricting to $\D$, we may obtain the equations fully in terms of the new variables
        \begin{equation*}
            \begin{split}
                \dot{x} & = \bar{v}_{x} \quad \quad \dot{\bar{v}}_x = 0 \\
                \dot{y} & = \bar{v}_{y}\quad \quad \dot{\bar{v}}_y = \frac{y \bar{v}_{y}^{2}}{1+y^{2}} \\
                \dot{z} & = \bar{v}_{z} \quad \quad \dot{\bar{v}}_{z} = \bar{v}_{x}\bar{v}_{y}.
            \end{split}
        \end{equation*}
        This system of equations is equivalent to system \eqref{h:equations}. Hence, the geodesics of $h$ coincide with the trajectories of $\sqrt{1+y^{2}}\Gamma_{(g,\D)}$ and, thus, they are a reparametrization of nonholonomic trajectories.
        
        If a potential function $V$ independent of $z$ is also present, then the nonholonomic trajectories associated with $L=L_{g}-V$ are reparametrizations of the mechanical trajectories associated with the Lagrangian $L_{h}-V$ and initial velocity on $\mathcal{D}$.
    \end{example}

    \begin{remark}\label{remark:lagrangian:side}
        In this example, we have observed that the tangent map of the transformation $\bar{\psi}:\D \to \D$ sending $v_{q}\mapsto e^{-\phi(\pi(q))} v_{q}$ is mapping the vector field $e^{-\phi(\pi(q))}\Gamma_{(g,\D)}$, where $\Gamma_{(g,\D)}$ is the vector field determining nonholonomic trajectories, into the geodesic vector field $\Gamma_{h}|_{\D}$ restricted to the distribution $\D$.
    \end{remark}

    \begin{example}\label{example3}[The Veselova problem]
        Consider a nonholonomic kinetic system in the manifold of three-dimensional rotation matrices $Q=SO(3)$. The Riemannian metric is the left-invariant metric corresponding to the inner-product $(\Omega_{1},\Omega_{2})_{\mathbb{I}} = \Omega_{1}^{T} \mathbb{I} \Omega_{2}$ on $\mathfrak{so}(3)$, where $\Omega_{i}$ are identified with vectors in $\R^{3}$ through the hat map $\hat{(\cdot)}:\R^{3}\to\mathfrak{so}(3)$ defined by
        $$\hat{\Omega}=\begin{bmatrix}
            0 & -\Omega_{3} & \Omega_{2} \\
            \Omega_{3} & 0 & -\Omega_{1} \\
            -\Omega_{2} & \Omega_{1} & 0
        \end{bmatrix},$$
        and $\mathbb{I}:\R^{3}\to \R^{3}$ is the inertia tensor. An important fact about the inertia tensor that we will use later is that there exists a diagonal matrix $A = \text{diag}(a_{1}, a_{2}, a_{3})$ with positive entries such that
        $$\mathbb{I}(u\times v) = (A u) \times (A v).$$
        For example, if $\mathbb{I}=\text{diag}(I_{1}, I_{2}, I_{3})$ with non-vanishing positive diagonal entries then $A=\text{diag}\left(\sqrt{\frac{I_{2}I_{3}}{I_{1}}}, \sqrt{\frac{I_{1}I_{3}}{I_{2}}}, \sqrt{\frac{I_{1}I_{2}}{I_{3}}}\right).$
        
        Now, let $\{e_{1}, e_{2}, e_{3}\}$ be the canonical basis of $\R^{3}$ and consider the right-invariant distribution whose value at the identity is the subspace
        $$\mathfrak{d} = \{\omega\in \mathfrak{so}(3)\ | \ (\omega, e_{3})=0\}.$$
        Nonholonomic systems on Lie groups where the Lagrangian function is left-invariant and the constraint distribution is right-invariant are usually called LR nonholonomic systems. Despite the apparent asymmetry of the system, there is a symmetry under the left-action of the Lie group
        $$SO(2)=\{h\in SO(3) \ | \ h^{-1}e_{3}=e_{3}\},$$
        that is the group of rotations around the axis $e_{3}$. Under this action, the system is a Chaplygin system whose reduced space is $\bar{Q}=\mathbb{S}^{2}$ with projection $\pi:Q\to \bar{Q}$ given by $\pi(g)=g^{-1}e_{3}$. Here, $$\mathbb{S}^{2}=\{\gamma = (x,y,z)\in \R^{3} \ | \ x^{2}+y^{2}+z^{2}=1\}.$$
        The right-invariant distribution generated by the subspace $\mathfrak{d}$ of the Lie algebra happens to be invariant under the action of $SO(2)$ and thus it is an horizontal distribution for $\pi$, while the vertical distribution $V\pi=\ker T\pi$ is the left invariant distribution given by
        $$V_{g}\pi=\{X_{g}\in T_{g}SO(3) \ | \ (g^{-1}X_{g}) \cdot e_{3} = 0\}.$$
        It is better described by its value at the identity
        $$V_{e}\pi=\{\Omega\in \R^{3} \ | \ \Omega \times e_{3}=0\} = \text{span}\{e_{3}\}.$$
        In addition, again following \cite{GaMa}, this system is also a $\phi$-simple system with $\phi(\gamma)=-\frac{1}{2}\ln(A\gamma, \gamma)$ and $(\cdot, \cdot)$ being the euclidean inner product on $\R^{3}$.

        The reduced Riemannian metric $\bar{g}$ on $\bar{Q}$ is given in stereographic coordinates $(x,y)$ on the northern hemisphere of the sphere by the matrix
        $$\bar{g}=\begin{bmatrix}
            a & b \\
            b & c
        \end{bmatrix}$$
        with
        \begin{equation*}
            \begin{split}
                a & = (A\gamma, \gamma) \left( a_{1} + \frac{a_{3}x^{2}}{\sqrt{1-x^2 -y^2}}\right) - x^{2}(a_{3}-a_{1})^{2} \\
                b & = (A\gamma, \gamma) \left( \frac{a_{3}x y}{\sqrt{1-x^2 -y^2}}\right) - x y(a_{3}-a_{1})(a_{3}-a_{2}) \\
                c & = (A\gamma, \gamma) \left( a_{2} + \frac{a_{3}y^{2}}{\sqrt{1-x^2 -y^2}}\right) - y^{2}(a_{3}-a_{2})^{2}
            \end{split}
        \end{equation*}
        as obtained in \cite{GaMa}. Thus, the reduced trajectories are reparametrizations of the geodesics of the Riemannian metric
        $$g_{can}=\begin{bmatrix}
            (A\gamma,\gamma)^{-1}a & (A\gamma,\gamma)^{-1}b \\
            (A\gamma,\gamma)^{-1}b & (A\gamma,\gamma)^{-1}c
        \end{bmatrix}$$

    Let us define the principal Riemannian metric $h$ on $SO(3)$, starting by prescribing its values at the identity $e\in SO(3)$. For all $X, Y \in \mathfrak{X}(\bar{Q})$ and $Z, W\in \Gamma(V)$ we have that
    \begin{equation*}
        \begin{split}
             & h_{e}(X_{e}^{h}, Y_{e}^{h}) = (g_{can})_{\pi(e)}(X,Y)=(g_{can})_{e_{3}}(X,Y) \\
             & h_{e}(X_{e}^{h}, Z_{e}) = 0 \\
             & h_{e}(Z, W) = (Z_{e},W_{e})_{\mathbb{I}}
        \end{split}
    \end{equation*}
    The left trivialization of the horizontal lifts of the coordinate vector fields on $\bar{Q}$ are obtained in \cite{GaMa} to be
    $$\left(\frac{\partial}{\partial x}\right)_{g}^{h}=\pi(g)\times \frac{\partial}{\partial x}, \text{ and } \left(\frac{\partial}{\partial y}\right)_{g}^{h}=\pi(g)\times \frac{\partial}{\partial y},$$
    where the coordinate vector fields are given in terms of the canonical basis of $\R^{3}$ by the expressions
    $$\frac{\partial}{\partial x} = e_{1} - \frac{x}{\sqrt{1-x^2-y^2}} e_3, \text{ and } \frac{\partial}{\partial y}=e_{2} - \frac{y}{\sqrt{1-x^2-y^2}} e_3.$$
    Therefore, at the identity we have that
    $$\left(\frac{\partial}{\partial x}\right)_{e}^{h}= e_{2}, \text{ and } \left(\frac{\partial}{\partial y}\right)_{e}^{h}= - e_{1},$$
    which implies that $h_{e}(e_{1}, e_{2})= -(g_{can})_{e_{3}}(\frac{\partial}{\partial y},\frac{\partial}{\partial x})=0$, $h_{e}(e_{1}, e_{1})=(g_{can})_{e_{3}}(\frac{\partial}{\partial y},\frac{\partial}{\partial y})=a_{2}$ and $h_{e}(e_{2}, e_{2})= (g_{can})_{e_{3}}(\frac{\partial}{\partial x},\frac{\partial}{\partial x})=a_{1}$. In addition, $V_{e}\pi =\text{span}\{ e_{3} \}$ and so $h_{e}(e_{3}, e_{3})=e_{3}^{T}\mathbb{I}e_{3} = \mathbb{I}_{33}$. Therefore,
    $$h_{e}(\Omega_{1}, \Omega_{2})= \Omega_{1}^{T}\begin{bmatrix}
        a_{2} & 0 & 0 \\
        0 & a_{1} & 0 \\
        0 & 0 & \mathbb{I}_{33}
    \end{bmatrix} \Omega_{2}, \quad \Omega_{1,2} \in \mathfrak{so}(3).$$
    
    However, the principal Riemannian metric $h$ might not be left-invariant. In fact, if $\bar{X}, \bar{Y} \in \mathfrak{X}(\bar{Q})$, then $h_{g}(\bar{X}_{g}^{h}, \bar{Y}_{g}^{h})=g_{can, \pi(g)}(\bar{X},\bar{Y})= e^{2\phi(\pi(g))} \bar{g}(\bar{X},\bar{Y})$. This implies that the Riemannian metric $h$ will in general depend on the point $g\in SO(3)$ and will not be left-invariant. It can be easily checked that $h_{g}(\left(\frac{\partial}{\partial x}\right)_{g}^{h}, \left(\frac{\partial}{\partial x}\right)_{g}^{h})$ does not coincide with $h_{e}(\pi(g)\times \frac{\partial}{\partial x}, \pi(g)\times \frac{\partial}{\partial x})$ for an arbitrary matrix $A$ and $g\in SO(3)$.

    
    Therefore, the expression of $h$ can be quite different at different points of $SO(3)$. In any case, the principal Riemannian metric $h$ in an arbitrary point $g\in SO(3)$  with respect to the reference frame $\{X_{1} = \left(\frac{\partial}{\partial x}\right)_{g}^{h}, X_{2} = \left(\frac{\partial}{\partial y}\right)_{g}^{h}\}$ generating $\D$ and $X_{3}=g\cdot e_{3}$, generating $V$, is represented by the matrix
    $$h(X_{i}, X_{j}) = \begin{bmatrix}
        (A\gamma,\gamma)^{-1}a & (A\gamma,\gamma)^{-1}b & 0 \\
        (A\gamma,\gamma)^{-1}b & (A\gamma,\gamma)^{-1}c & 0 \\
        0 & 0 & \mathbb{I}_{33}
    \end{bmatrix}$$

    A more detailed study of the nature of the Riemannian metric $h$ will be postponed to a future paper.
    \end{example}

    \begin{remark}\label{multi:dimensional:Veselova}
        A multi-dimensional version of the Veselova problem with configuration space the Lie group $SO(n)$ was considered in \cite{GaMa}. This system is also a $\phi$-simple Chaplygin. So, their trajectories are reparametrizations of the trajectories of an unconstrained mechanical Lagrangian system on $SO(n)$. 
    \end{remark}
    
	\section{Conclusions and future work}
	We prove that unreduced $\phi$-simple Chaplygin mechanical systems are mechanically Hamiltonizable up to reparametrization. More precisely, we prove that it is possible to modify the Riemannian metric associated to the Kinetic energy of the system to produce a new Riemannian metric. Then, the trajectories of the unconstrained mechanical Lagrangian system induced by the new metric and the same potential energy are reparametrizations of the original nonholonomic trajectories. Our method is constructive and it is easily applied to several interesting examples.
    
	The previous results rise several interesting questions:
	\begin{enumerate}	
		\item If the symmetries are not just of kinematic type (as in Chaplygin systems) but there are, for instance, horizontal symmetries, can we still do something?
		
		\item There is evidence in the literature that, under the same assumptions, there are many more Lagrangian functions w.r.t. which the nonholonomic trajectories are the solution of Euler-Lagrange equations. Namely, we can also find non-positive-definite semi-Riemannian metrics whose geodesics are nonholonomic trajectories. So, what is the more general picture?

        \item Take advantage of the modified Riemannian metrics to produce numerical integrators for nonholonomic systems (see \cite{CM2001}).

        \item It is interesting to note that the situation described in Remark \ref{remark:lagrangian:side} can be generalized to all of our examples. In fact, if $(Q,g, V, G, \D)$ is a $\phi$-simple Chaplygin system with $G$-invariant potential $V$, $h$ is the principal Riemannian metric constructed in Theorem \ref{potential:thm} and $\tau_{Q}:TQ\to Q$ is the canonical tangent projection on $Q$, then the tangent map of the transformation $\bar{\psi}:\D \to \D$ sending $v_{q}\mapsto e^{-\phi(\pi(q))} v_{q}$ will map the vector field $e^{-(\phi\circ\pi\circ (\tau_{Q})|_{\D})}\Gamma_{(g,V,\D)}$, where $\Gamma_{(g,V,\D)}$ is the vector field determining the nonholonomic trajectories of this system, into the mechanical vector field $\Gamma_{(h,V)}|_{\D}$ restricted to the distribution $\D$.

        We will give a sketch of the proof of this result. Denote by $(T\pi)|_{\D}: \D\to T\bar{Q}$ the restriction to $\D$ of the tangent map of $\pi:Q \to \bar{Q}$ and by $\Gamma_{nh}\in \mathfrak{X}(T\bar{Q})$ the $(T\pi)|_{\D}$-projection of the vector field $\Gamma_{(g,V,\D)}$ (note that $\Gamma_{(g,V,\D)}$ is $G$-invariant and, thus, it is $(T\pi)|_{\D}$-projectable). If $g_{can}$ is the Riemannian metric on $\bar{Q}$ given by $g_{can}=e^{2\phi}\bar{g}$, $\bar{V}\in C^{\infty}(\bar{Q})$ is such that $V=\bar{V}\circ \pi$ and $\Gamma_{(g_{can},\bar{V})}$ is the dynamical vector field in $T\bar{Q}$ associated with the mechanical Lagrangian system $L_{(g_{can},\bar{V})}$ then, using Corollary \ref{reparametrization:mechanical:system}, we deduce that these vector fields are related by the vector bundle isomorphism
        $$\tilde{\bar{\psi}}=\sharp_{g_{can}}\circ \psi \circ \flat_{\bar{g}}:T\bar{Q}\to T^{*}\bar{Q}\to T^{*}\bar{Q}\to T\bar{Q}.$$
        Here, $\flat_{\bar{g}}:T\bar{Q} \to T^{*}\bar{Q}$, $\sharp_{g_{can}}:T^{*}\bar{Q}\to T\bar{Q}$ are the musical isomorphisms induced by the Riemannian metrics $\bar{g}$ and $g_{can}$, respectively, and $\psi:T^{*}\bar{Q}\to T^{*} \bar{Q}$ is the isomorphism in Proposition \ref{vector:bundle:isomorphism}. Now, a direct computation proves that
        $$\tilde{\bar{\psi}}(v_{\bar{q}})=e^{-\phi(\bar{q})}v_{\bar{q}}, \text{ for } v_{\bar{q}}\in T_{\bar{q}}\bar{Q}.$$
        On the other hand, considering $(\tau_{Q})|_{\D}$ the restriction of $\tau_{Q}$ to $\D$, the trajectories in $Q$ of the vector fields $e^{-(\phi\circ\pi\circ(\tau_{Q})|_{\D})}\Gamma_{(g,V,\D)}$ $\in\mathfrak{X}(\D)$ and $\Gamma_{(h,V)}|_{\D}$ are the horizontal lifts, with respect to the Riemannian submersion $\pi: (Q,h)\to (\bar{Q}, g_{can})$, of the trajectories in $\bar{Q}$ of the vector fields $e^{-(\phi\circ\tau_{\bar{Q}})}\Gamma_{nh}\in\mathfrak{X}(T\bar{Q})$ and $\Gamma_{(g_{can}, \bar{V})}\in \mathfrak{X}(T\bar{Q})$, respectively, where $\tau_{\bar{Q}}:T\bar{Q}\to \bar{Q}$ is the canonical tangent projection on $\bar{Q}$. Therefore, using that the following diagram
        \begin{figure}[htb!]
            \centering
            \begin{tikzcd}
                \mathcal{D} \arrow[d, "(T\pi)|_{\mathcal{D}}"'] \arrow[r, "\bar{\psi}"] & \mathcal{D} \arrow[d, "(T\pi)|_{\mathcal{D}}"] \\
                T\bar{Q} \arrow[r, "\tilde{\bar{\psi}}"]                                & T\bar{Q}                                      
            \end{tikzcd}
        \end{figure}
        is commutative, we conclude that the vector fields $e^{-(\phi\circ \pi \circ (\tau_{Q})|_{\D})}\Gamma_{(g,V,\D)}\in \mathfrak{X}(\D)$ and $\Gamma_{(h,V)}\in \mathfrak{X}(\D)$ are $\bar{\psi}$-related.
        \end{enumerate}
        
\section*{Acknowledgements}

The authors acknowledge financial support from Grants PID2022-137909-NB-C21, PID2022-137909-NB-C22 and RED2022-134301-TD funded by the Spanish Ministry of Science and Innovation. The authors are also thankful to L. Garc\'ia-Naranjo for some useful comments on a first version of this paper.

\printbibliography

@article{Chaplygin2008,
  AUTHOR = {Chaplygin, S. A.},
  TITLE = {On the theory of motion of nonholonomic systems. The reducing-multiplier theorem},
  JOURNAL = {Regul. Chaotic Dyn.},
  FJOURNAL = {Regular and Chaotic Dynamics},
  VOLUME = {13},
  YEAR = {2008},
  PAGES = {369--376},
  NOTE = {(Translated from Mat. Sb. (Russian) 28 (1911) by A. V. Getling)},
}

@article{Bolsinov2011,
  AUTHOR = {Bolsinov, A.V. and Borisov, A.V. and Mamaev, I.S.},
  TITLE = {Hamiltonization of non-holonomic systems in the neighborhood of invariant manifolds},
  JOURNAL = {Regul. Chaot. Dyn.},
  FJOURNAL = {Regular and Chaotic Dynamics},
  VOLUME = {16},
  YEAR = {2011},
  PAGES = {443--464},
}

@article{Fedorov2004,
  AUTHOR = {Fedorov, Y. N. and Jovanović, B.},
  TITLE = {Nonholonomic LR systems as generalized Chaplygin systems with an invariant measure and flows on homogeneous spaces},
  JOURNAL = {J. Nonlinear Sci.},
  FJOURNAL = {Journal of Nonlinear Science},
  VOLUME = {14},
  YEAR = {2004},
  PAGES = {341--381},
}

@article{Grab12,
  AUTHOR = {Grabowski, J.},
  TITLE = {Modular classes of skew algebroid relations},
  JOURNAL = {Transformation Groups},
  FJOURNAL = {},
  VOLUME = {17},
  YEAR = {2012},
  PAGES = {989--1010},
}

@incollection{Ehlers2005,
  AUTHOR = {Ehlers, K. and Koiller, J. and Montgomery, R. and Rios, P. M.},
  TITLE = {Nonholonomic systems via moving frames: Cartan equivalence and Chaplygin Hamiltonization},
  BOOKTITLE = {The Breath of Symplectic and Poisson Geometry},
  SERIES = {Progress in Mathematics},
  VOLUME = {232},
  PAGES = {75--120},
  YEAR = {2005},
  PUBLISHER = {Birkhäuser},
}

@article{Garcia-Naranjo2019a,
  AUTHOR = {García-Naranjo, L. C.},
  TITLE = {Generalisation of Chaplygin’s reducing multiplier theorem with an application to multi-dimensional nonholonomic dynamics},
  JOURNAL = {J. Phys. A: Math. Theor.},
  FJOURNAL = {Journal of Physics A: Mathematical and Theoretical},
  VOLUME = {52},
  YEAR = {2019},
  PAGES = {205203},
}

@article{Garcia-Naranjo2019b,
  AUTHOR = {García-Naranjo, L. C.},
  TITLE = {Hamiltonisation, measure preservation and first integrals of the multi-dimensional rubber Routh sphere},
  JOURNAL = {Theor. Appl. Mech.},
  FJOURNAL = {Theoretical and Applied Mechanics},
  VOLUME = {46},
  YEAR = {2019},
  PAGES = {65--88},
}

@article{Dragovic2023,
  AUTHOR = {Dragovic, V. and Gajic, B. and Jovanovic, B.},
  TITLE = {Gyroscopic Chaplygin systems and integrable magnetic flows on spheres},
  JOURNAL = {J. Nonlinear Sci.},
  FJOURNAL = {Journal of Nonlinear Science},
  VOLUME = {33},
  NUMBER = {3},
  YEAR = {2023},
  PAGES = {43},
}

@article{Stanchenko1989,
  AUTHOR = {Stanchenko, S.},
  TITLE = {Nonholonomic Chaplygin systems},
  JOURNAL = {J. Appl. Math. Mech.},
  FJOURNAL = {Journal of Applied Mathematics and Mechanics},
  VOLUME = {53},
  YEAR = {1989},
  PAGES = {11--17},
  NOTE = {(English transl.)},
}

@article{Cantrijn2002,
  AUTHOR = {Cantrijn, F. and Cortés, J. and de León, M. and Martín de Diego, D.},
  TITLE = {On the geometry of generalized Chaplygin systems},
  JOURNAL = {Math. Proc. Camb. Phil. Soc.},
  FJOURNAL = {Mathematical Proceedings of the Cambridge Philosophical Society},
  VOLUME = {132},
  YEAR = {2002},
  PAGES = {323--351},
}

@misc{belrhazi2024geodesic,
      title={Geodesic extensions of mechanical systems with nonholonomic constraints}, 
      author={Malika Belrhazi and Tom Mestdag},
      year={2024},
      eprint={2404.11997},
      archivePrefix={arXiv},
      primaryClass={math.DG}
}

@book {KoNo-I,
	AUTHOR = {Kobayashi, Shoshichi and Nomizu, Katsumi},
	TITLE = {Foundations of differential geometry. {V}ol {I}},
	PUBLISHER = {Interscience Publishers, a division of John Wiley \& Sons, New
	York-London},
	YEAR = {1963},
	PAGES = {xi+329},
	MRCLASS = {53.00},
	MRNUMBER = {0152974},
	MRREVIEWER = {J. Eells},
}

@article {marta,
    AUTHOR = {Barbero-Li\~n\'an, Mar\'ia and Farr\'e{} Puiggal\'i, Marta and
              Mart\'in de Diego, David},
     TITLE = {Isotropic submanifolds and the inverse problem for mechanical
              constrained systems},
   JOURNAL = {J. Phys. A},
  FJOURNAL = {Journal of Physics. A. Mathematical and Theoretical},
    VOLUME = {48},
      YEAR = {2015},
    NUMBER = {4},
     PAGES = {045210, 35},
      ISSN = {1751-8113,1751-8121},
   MRCLASS = {70F25 (53D12 58E30)},
  MRNUMBER = {3300263},
MRREVIEWER = {Ram\ Krishan\ Sharma},
       DOI = {10.1088/1751-8113/48/4/045210},
       URL = {https://doi.org/10.1088/1751-8113/48/4/045210},
}

@article{FGNM2015,
	AUTHOR = {Fedorov, Y. N. and Garc\'{\i}a-Naranjo, L. C. and Marrero, J. C.},
	TITLE = {Unimodularity and Preservation of Volumes in Nonholonomic Mechanics},
	JOURNAL = {J Nonlinear Sci},
	FJOURNAL = {J Nonlinear Sci},
	VOLUME = {25},
	YEAR = {2015},
	NUMBER = {},
	PAGES = {203–246},
	ISSN = {},
	MRCLASS = {},
	MRNUMBER = {},
	DOI = {https://doi.org/10.1007/s00332-014-9227-4},
	URL = {},
}

@book {Chaplygin,
AUTHOR = {Chaplygin, S. A.},
TITLE = {Selected Works on Mechanics and Mathematics},
NOTE = {Techincal-Theoretical Literature, Moscow},
PUBLISHER = {State Publ. House},
YEAR = {1954},
PAGES = {},
MRCLASS = {},
MRNUMBER = {},
MRREVIEWER = {},
}

@article{BaYa,
	AUTHOR       = {Balseiro, Paula and Yapu, Luis P},
	title         = {Conserved quantities and Hamiltonization of nonholonomic systems},
	JOURNAL = {Annales de L' Institut Henri Poincare C, Analyse non lineaire},
	FJOURNAL = {Annales de L' Institut Henri Poincare C, Analyse non lineaire},
	VOLUME = {33},
	YEAR = {2020},
	NUMBER = {3},
	PAGES = {1297-1341},
	ISSN = {0305-4470},
	MRCLASS = {58F05 (70F05 70F20 70H35)},
	MRNUMBER = {1212006},
	MRREVIEWER = {Richard C. Churchill},
	URL = {https://doi.org/10.1016/j.anihpc.2020.05.003},
}

@article{GaMa,
	AUTHOR       = { Garcia-Naranjo, Luís and Marrero, Juan Carlos},
	title         = {The geometry of the non-holonomic Chaplygin systems revisited},
	JOURNAL = {Nonlinearity},
	FJOURNAL = {Nonlinearity},
	VOLUME = {33},
	YEAR = {2020},
	NUMBER = {3},
	PAGES = {1297-1341},
	ISSN = {0305-4470},
	MRCLASS = {58F05 (70F05 70F20 70H35)},
	MRNUMBER = {1212006},
	MRREVIEWER = {Richard C. Churchill},
	URL = {https://doi.org/10.1088/1361-6544/ab5c0a},
}

@article{BLOCH2009225,
	title = {Hamiltonization of nonholonomic systems and the inverse problem of the calculus of variations},
	journal = {Reports on Mathematical Physics},
	volume = {63},
	number = {2},
	pages = {225 - 249},
	year = {2009},
	issn = {0034-4877},
	doi = {https://doi.org/10.1016/S0034-4877(09)90001-5},
	author = {A.M. Bloch and O.E. Fernandez and T. Mestdag},
}

@ARTICLE{AAMkinetic,
	author = {Anahory Simoes, A. and Marrero, J.C. and Martín de Diego, D.},
	title = "{Radial kinetic nonholonomic trajectories are Riemannian geodesics!}",
	journal = {Anal.Math.Phys.},
	year ={2021},
	VOLUME = {11},
	NUMBER = {152},
	doi = {10.1007/s13324-021-00584-3}
}

@article {GrLeMaMa,
	AUTHOR = {Grabowski, J. and de Le\'{o}n, M. and Marrero, J. C. and Mart\'{\i}n de
	Diego, D.},
	TITLE = {Nonholonomic constraints: a new viewpoint},
	JOURNAL = {J. Math. Phys.},
	FJOURNAL = {Journal of Mathematical Physics},
	VOLUME = {50},
	YEAR = {2009},
	NUMBER = {1},
	PAGES = {013520, 17},
	ISSN = {0022-2488},
	MRCLASS = {37J60 (53D99 70F25 70G45 70H03)},
	MRNUMBER = {2492630},
	MRREVIEWER = {Frans Cantrijn},
	DOI = {10.1063/1.3049752},
	URL = {https://doi.org/10.1063/1.3049752},
}

@article {CM2001,
	AUTHOR = {Cortés, J. and Martínez, S.},
	TITLE = {Non-holonomic integrators},
	JOURNAL = {Nonlinearity},
	FJOURNAL = {Nonlinearity},
	VOLUME = {14},
	YEAR = {2001},
	NUMBER = {5},
	PAGES = {1365--1392},
	ISSN = {0951-7715},
	CODEN = {NONLE5},
	MRCLASS = {37M15 (37J60 65P10 70-08 70F25)},
	MRNUMBER = {1862825},
	MRREVIEWER = {Antonella Zanna},
	DOI = {10.1088/0951-7715/14/5/322},
	DURL = {http://dx.doi.org/10.1088/0951-7715/14/5/322},
}

@book {Cortes,
	AUTHOR = {Cort\'{e}s, J.},
	TITLE = {Geometric, control and numerical aspects of nonholonomic systems},
	SERIES = {Lecture Notes in Mathematics},
	VOLUME = {1793},
	PUBLISHER = {Springer-Verlag, Berlin},
	YEAR = {2002},
	PAGES = {xvi+219},
	ISBN = {3-540-44154-9},
	MRCLASS = {70F25 (37J60 70-08 70G45 70Q05 93C85)},
	MRNUMBER = {1942617},
	MRREVIEWER = {Andrew D. Lewis},
	DOI = {10.1007/b84020},
	URL = {https://doi.org/10.1007/b84020},
}

@article{koiller92,
	AUTHOR = {Koiller, J.},
	TITLE = {Reduction of some classical nonholonomic systems with symmetry},
	JOURNAL = {Arch. Rational Mech. Anal.},
	FJOURNAL = {Archive for Rational Mechanics and Analysis},
	VOLUME = {118},
	YEAR = {1992},
	NUMBER = {2},
	PAGES = {113--148},
    ISSN = {0003-9527},
	MRCLASS = {70F25 (58F05 70G99 70H33)},
	MRNUMBER = {1158932},
	MRREVIEWER = {Larisa E. Veselova},
    DOI = {10.1007/BF00375092},
	URL = {https://doi.org/10.1007/BF00375092},
}

@article{BKMM96,
	author        = {A. Bloch and P.S. Krishnaprasad and J.E. Marsden and Murray},
	title         = {Nonholonomic mechanical systems with symmetry},
	journal  = {Arch. Rational Mech. Anal.},
	year          = {1996},
	volume        = {136},
	number        = {},
	pages         = {21--99},
	doi           = {}
}

@article{Lewis98,
	author	= {A.D. Lewis},
	title	= {Affine connections and distributions with applications to nonholonomic mechanics},
	journal	= {Reports on Math. Physics},
	year          = {1998},
	volume        = {42},
	number        = {1-2},
	pages         = {135--164},
	doi           = {10.1016/S0034-4877(98)80008-6}
}

@article{LMdD1996,
	author	= {M. de León and D. Martín de Diego},
	title	= {On the geometry of non‐holonomic Lagrangian systems},
	journal	={Journal of Mathematical Physics},
	year          = {1996},
	volume        = {37},
	number        = {},
	pages         = {3389--3414},
	doi           = {10.1063/1.531571}
}

@ARTICLE{ON66,
  title={The fundamental equations of a submersion},
  author={B. O'Neill},
  journal={The Michigan Mathematical Journal},
  volume={13},
  number={4},
  pages={459--469},
  year={1966}
}

@article{ON67,
  title={Submersions and geodesics},
  author={O'Neill, Barrett},
  journal={Duke Math. J.},
  volume={34},
  number={2},
  pages={363--373},
  year={1967}
}

@book{AM78,
	author        = {R. Abraham and J.E. Marsden},
	title         = {Foundations of Mechanics},
	year          = {1978},
	publisher     = {Benjamin/Cummings Publishing Co.Inc. Advanced Book Program, Reading, Mass},
	location      = {}
}

@book{Bloch,
	author        = {A. Bloch},
	title         = {Nonholonomic Mechanics and Control},
	year          = {2015},
	publisher     = {Springer, Interdisciplinary Applied Mathematics 24},
	location      = {}
}

@book{LR89,
	author        = {de León, M. and Rodrigues, P.R.},
	title         = {Methods of differential geometry in analytical mechanics},
	year          = {1989},
	publisher     = {North Holland Mathematics Studies 158},
	location      = {}
}

@book{ON83,
	author = {O'Neill, B.},
	title = {Semi-Riemannian Geometry With Applications to Relativity, 103, Volume 103 (Pure and Applied Mathematics)},
	year = {1983},
	publisher = {Academic Press},
	url = {http://www.amazon.com/Semi-Riemannian-Geometry-Applications-Relativity-Mathematics/dp/0125267401},
}
	
\end{document}